\newtheorem{corollary}{Corollary}[section]
\newtheorem{lemma}[corollary]{Lemma}
\newtheorem{proposition}[corollary]{Proposition}
\newtheorem{remark}[corollary]{Remark}
\newtheorem{theorem}[corollary]{Theorem}
\newcommand{\mylabel}[1]{\label{#1}
            \ifx\undefined\stillediting
            \else \fbox{$#1$}\fi }
\newcommand{\BE}{\begin{equation}}
\newcommand{\EEQ}{\end{equation}}
\newcommand{\rfb}[1]{\mbox{\rm
   (\ref{#1})}\ifx\undefined\stillediting\else:\fbox{$#1$}\fi}
\newfont{\Blackboard}{msbm10 scaled 1200}
\newcommand{\bl}[1]{\mbox{\Blackboard #1}}
\newfont{\roma}{cmr10 scaled 1200}
\def\CC{\rm \hbox{C\kern-.56em\raise.4ex
         \hbox{$\scriptscriptstyle |$}\kern+0.5 em }}
\newcommand{\nline}  {{\bl N}}
\newcommand{\rline}  {{\bl R}}
\newcommand{\mm}    {{\hbox{\hskip 0.5pt}}}
\newcommand{\bluff} {{\hbox{\raise 15pt \hbox{\mm}}}}
\providecommand{\abs}[1]{\lvert#1\rvert}
\def\section{\@startsection {section}{1}{\z@}{-3.5ex plus -1ex minus
    -.2ex}{2.3ex plus .2ex}{\large\bf}}
\def\be{\begin{equation}}
\def\ee{\end{equation}}
\def\ds{\displaystyle}
\newcommand{\norm}[2]{\|#1 \| _{#2} }
\begin{document}

\thispagestyle{empty}
\title[Stabilization and controllability of the KdV on a star-shaped network]{Feedback stabilization and boundary controllability  of the  Korteweg-de Vries equation on a star-shaped network}
\date\today
\author{Ka\"{i}s Ammari}
\address{UR Analysis and Control of PDEs, UR 13ES64, Department of Mathematics, Faculty of Sciences of Monastir, University of Monastir, Tunisia and Laboratoire de Math\'ematiques, Universit\'e de Versailles Saint-Quentin en Yvelines, 78035 Versailles, France and Universit\'e Paris-Saclay, France}
\email{kais.ammari@fsm.rnu.tn}
\author{Emmanuelle Crepeau}
\address{Laboratoire de Math\'ematiques, Universit\'e de Versailles Saint-Quentin en Yvelines, 78035 Versailles, France}
\email{emmanuelle.crepeau@uvsq.fr}

\begin{abstract}
We propose a model using  the Korteweg-de Vries $(KdV)$ equation on a finite star-shaped network. We first prove the well-posedness of the system and give some regularity results. Then we prove that the energy of the solutions of the dissipative system decays exponentially to zero when the time tends to infinity. Lastly we show an exact boundary controllability result.
 
\medskip

\noindent
{\bf R\'esum\'e.}
On propose dans cet article un mod\`ele de l'\'equation de Korteweg-de Vries $(KdV)$ sur un r\'eseau sous forme d'une \'etoile. On prouve que le probl\`eme est bien pos\'e et on \'etablit quelques propri\'et\'es de r\'egularit\'e. De plus, on montre que l'\'energie du syst\`eme d\'ecroit d'une mani\`ere exponentielle vers $0$ quand le temps tend vers l'infini. A la fin, on d\'eduit un r\'esultat de contr\^olabilit\'e fronti\`ere du syst\`eme associ\'e. 

\end{abstract}

\subjclass[2010]{35L05, 35M10}
\keywords{Star-Shaped Network, KdV equation, stabilization, controllability}

\maketitle

\tableofcontents

 
\section{Introduction} \label{secintro}

\medskip

In the last few years various physical models of multi-link flexible structures consisting of finitely many interconnected flexible elements such as strings, beams, plates, shells have been mathematically studied. For details about some physical motivation for the models, see 
\cite{dagerzuazua, ammari4, amjel, ammarinicaise} and the references therein.

 In \cite{Crepeau-Sorine}, the Korteweg-de Vries equation (KdV) is designed for modeling  the pressure in an arterial compartment. Indeed, the Korteweg-de Vries equation models usually long waves in a channel of relatively shallow depth.  Thus we propose a new model using this nonlinear dispersive partial differential equation on a network to be used to model the pressure on the arterial tree.

Numerous papers on the stability or the exact controllability of the KdV equation on a finite length interval have already been studied, see for example \cite{Perla, Pazoto} for the stability and \cite{Rosier, Coron_Crepeau, Cerpa, Cerpa_Crepeau} for the control problem. In \cite{Cerpa}, a tutorial of both problems is presented. 

To our knowledge, there is no work about the KdV equation on a star-network but we can cite the article \cite{Crepeau} where the  controllability of the KdV equation on a compartment with nodes is presented.

\medskip

Now, let us first introduce some notations and definitions which will be used throughout the rest of the paper, in particular some which are linked to the notion of $C^{\nu }$- networks, $\nu \in \nline$ (as introduced in \cite{dagerzuazua}). 

\medskip

Let $\Gamma$ be a connected topological graph embedded in $\rline$, with $N$ edges ($N \in \nline^{*}$).  
Let $K=\{k_{j}\, :\, 1\leq j\leq N\}$ be the set of the edges of $\Gamma$. Each edge $k_{j}$ is a Jordan curve in $\rline$ and is assumed to be parametrized by its arc length $x_{j}$ such that
the parametrization $\pi _{j}\, :\, [0,\ell_j]\rightarrow k_{j}\, :\, x_{j}\mapsto \pi _{j}(x_{j})$ is $\nu$-times differentiable, i.e. $\pi _{j}\in C^{\nu }([0,\ell_j],\rline)$ for all $1\leq j\leq N$. 
The $C^{\nu}$- network $\mathcal{T}$  associated with $\Gamma$ is then defined as the union $${\mathcal T}=\bigcup _{j=1}^{N}k_{j}.$$

We define by $L:= \ds \sup_{j=1,..,N}{\ell_j}$, the maximal length of the network.

We study here the stabilization problem and the controllability one of a KdV system on a star-shaped network as in the following figure $1$ for $N =3$. 

\newpage

\begin{center} \label{fig}
\includegraphics[scale=1.40]{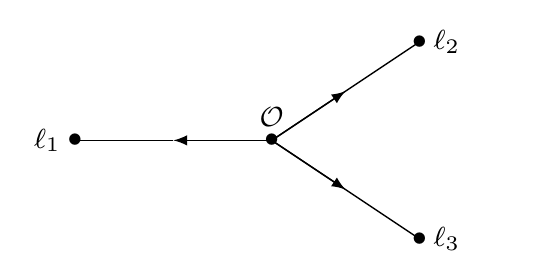}
\captionof{figure}{Star-Shaped Network for $N =3$}
\end{center}

More precisely, we study a system which is in connection with the mathematical modeling of the human cardiovascular system. For each edge $k_{j}$, the scalar function $u_j(t,x)$ for $x \in (0,\ell_j)$ and $t > 0$
contains the information on the displacement of the wave at location $x$ and time $t$, $1 \leq j \leq N$. 

\smallskip

\setcounter{equation}{0}
We consider the evolution problems  $(KdV)$ and $(LKdV)$  described by the following systems: 

\begin{equation*}
\leqno(KdV) 
\left \{
\begin{array}{ll}
(\partial_t u_{j} + \partial_x u_{j} + u_j \partial_x u_j + \partial^3_x u_j)(t,x)=0,&\, \forall \, x \in(0,\ell_j),\, t\in(0,+\infty),\, j = 1,...,N, \\
u_{j}(t,0)=u_{k}(t,0),&\, \forall \, j,k = 1,...,N, \, t > 0, \\
\ds \sum_{j=1}^N \partial^2_{x} u_j (t,0) = - \alpha \, u_1(t,0) - \frac{N}{3} u^2_1(t,0),& \, \forall \, t > 0, \\
u_j(t,\ell_j) = \partial_x u_j (t,\ell_j) = 0,& \, \forall \, t > 0, \, j= 1,...,N, \\
u_j(0,x)=u_j^0(x),& \, \forall \, x \in (0,\ell_j),\,   j=1,...,N,
\end{array}
\right.
\end{equation*}

and 

\begin{equation*}
\leqno(LKdV) 
\left \{
\begin{array}{ll}
(\partial_t u_{j} + \partial_x u_{j} + \partial^3_x u_j)(t,x)=0,&\, \forall \, x \in(0,\ell_j),\, t\in(0,\infty),\, j = 1,...,N, \\
u_{j}(t,0)=u_{k}(t,0),&\, \forall \, j,k = 1,...,N, \, t > 0, \\
\ds \sum_{j=1}^N \partial^2_{x} u_j (t,0) = - \alpha \, u_1(t,0),& \, \forall \, t > 0, \\
u_j(t,\ell_j) = \partial_x u_j (t,\ell_j) = 0,& \, \forall \, t > 0, \, j= 1,...,N, \\
u_j(0,x)=u_j^0(x),& \, \forall \, x \in (0,\ell_j),\,   j=1,...,N,
\end{array}
\right.
\end{equation*}

where $\alpha > \frac{N}{2}$.

We define the natural energy $E(t)$ of a solution $\underline{u} = (u_1,...,u_N)$ of $(KdV)$ or  $(LKdV)$ system by
\be 
\label{energy1}
E(t)=\frac{1}{2} \ds \sum_{j=1}^{N} \int_{0}^{\ell_j} |u_{j}(t,x)|^2  \, dx.
\ee

We can easily check that every sufficiently smooth solution of $(KdV)$ satisfies the following dissipation law 
\begin{equation}\label{dissipae1}
E^\prime(t) = - \ds \left(\alpha - \frac{N}{2}\right) \, \ds \bigl|u_{1}(t,0)\bigr|^2 - \frac{1}{2} \, \sum_{j=1}^N \left| \partial_x u_j(t,0)\right|^2 \leq 0, \, 
\end{equation}
and therefore, the energy is a nonincreasing function of the time variable $t$.

\medskip

This paper is organized as follows:
In Section \ref{well-posedness}, we give the proper functional setting for both systems $(LKdV)$ and $(KdV)$ and prove that those systems are well-posed. We also give some regularity results. 
In Section \ref{Stability}, we prove our main results, namely the stabilization problem of the systems given by $(LKdV)$ and $(KdV)$. For doing this, we derive first an observability inequality for the linear system and then we apply a fixed point theorem for the non-linear one.
In the last Section \ref{Control} we prove that the observability inequality also gives the controllability result in the case where the network is non critical.

\section{Well-posedness and regularity results} \label{well-posedness}

In order to study both systems on the network,  we need a proper functional setting. 
We define the following spaces:  
$$
H^s_r (0,\ell_j) = \left\{v \in H^{s} (0,\ell_j), \, \left(\frac{d}{dx}\right)^{i-1}  v (\ell_j) = 0, 1\leq i \leq s\right\}, \, s=1,2,
$$
$$
{\mathbb H}^s_e (\mathcal{T}) = \left\{\underline{u}=(u_1,...,u_N) \in \ds \prod_{j=1}^N H_r^s(0,\ell_j), \\ 
u_j(0) = u_k (0), \, \forall \, j,k =1, \ldots ,N \right\}, s=1,2,
$$
and
$$ 
{\mathbb L}^2(\mathcal{T}) = \ds \prod_{j=1}^{N} L^2(0,\ell_j),
$$
equipped with the inner product
\begin{equation}\label{ipV}
(\underline{u},\underline{v})_{{\mathbb L}^2 (\mathcal{T})} = 
\ds \sum_{j=1}^{N} \int_0^{\ell_j} u_j \overline{v}_j  \, 
dx, \, \forall \, \underline{u}, \underline{v} \in {\mathbb L}^2 (\mathcal{T}).
\end{equation}
We also define the following space $\mathbb{B}:=C([0,T],\mathbb{L}^2(\mathcal T))\cap L^2(0,T; \mathbb H^1_e(\mathcal T))$ endowed with the norm $$\norm{\underline u }{\mathbb{B}}:=\norm{\underline u}{C([0,T],\mathbb{L}^2(\mathcal T))}+\norm{\underline u}{L^2(0,T; \mathbb H^1_e(\mathcal T))}=\max_{t\in [0,T]}\norm{\underline u(t,.)}{L^2(\mathcal T)}+\left(\int_0^T\norm{\underline u(t,.)}{\mathbb H^1_e(\mathcal T)}^2dt\right)^{1/2}.$$

\subsection{Well-posedness of the $(LKdV)$ system and regularity results}

The system $(LKdV)$ can be rewritten as the first order evolution equation
\begin{equation} \left\{
\begin{array}{l}
U^\prime =\mathcal{A} U,\\
U(0)=( \underline{u}^{0}) ^T= U_0,
\end{array}\right.\label{pbfirstorder}\end{equation}
where $U$ is the vector $ \underline{u}^T$ and the operator $\mathcal{A} : {\mathcal D}({\mathcal A}) \subset {\mathbb L}^2 (\mathcal{T}) \rightarrow {\mathbb L}^2 (\mathcal{T})$ is defined by 
$$
\mathcal{A} \underline{u}^T := - \, \left(D_{\mathcal{T}} + D^3_{\mathcal{T}} \right) \underline{u}^T,
$$
$$
\forall \, \underline{u} 
\in {\mathcal D}({\mathcal A}) = \left\{\underline{u} = (u_1,\ldots,u_N) \in {\mathbb H}^2_e (\mathcal{T}) \cap \prod_{j=1}^N H^3(0,\ell_j), \, 
\ds \sum_{j=1}^N \ds \frac{d^2u_j}{dx^2} (0) = - \alpha \, u_1(0) \right\},
$$
and
$$ 
D_{\mathcal{T}} \underline{u}^T:= \left( 
\begin{array}{c}
\partial_x u_1 \\ \ldots \\ \partial_x u_N \end{array} \right), \, \forall \, 
\underline{u} \in \prod_{j=1}^N H^1(0,\ell_j).
$$

Now we can prove, according to the linear semi-group theory (see \cite{Pazy}), the well-posedness of system $(LKdV)$ and that the solution satisfies the dissipation law (\ref{dissipae1}).

\begin{proposition}\label{3exist1} 
For an initial datum $U_{0}\in \mathbb{L}^2(\mathcal{T})$, there exists a unique solution $U(t) := e^{t{\mathcal A}} U_0 \in C([0,\,+\infty),\, \mathbb{L}^2(\mathcal{T}))$ to problem (\ref{pbfirstorder}).
Moreover, the solution $\underline{u}$ satisfies \rfb{dissipae1}.
Therefore the energy is decreasing.
\end{proposition}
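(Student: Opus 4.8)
The plan is to apply the Lumer--Phillips theorem to the operator $\mathcal{A}$ on the Hilbert space $\mathbb{L}^2(\mathcal{T})$, showing that $\mathcal{A}$ is densely defined, dissipative, and maximal, hence the generator of a $C_0$-semigroup of contractions. The existence, uniqueness and continuity of $U(t)=e^{t\mathcal{A}}U_0$ then follow at once, and the dissipation law will fall out of the same computation that establishes dissipativity. First I would note that $\mathcal{D}(\mathcal{A})$ is dense, since it contains all tuples of smooth functions vanishing to high order at both ends of each edge (such functions trivially satisfy the continuity and coupling conditions at the node with $u_1(0)=0$).

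For \emph{dissipativity}, I would compute $\mathrm{Re}\,(\mathcal{A}\underline{u},\underline{u})_{\mathbb{L}^2(\mathcal{T})}$ for $\underline{u}\in\mathcal{D}(\mathcal{A})$ by integrating by parts on each edge. The boundary terms at $x=\ell_j$ vanish because $u_j(\ell_j)=\partial_x u_j(\ell_j)=0$; collecting the terms at the node $x=0$ and using the continuity $u_j(0)=u_1(0)$ together with the coupling condition $\sum_j \partial_x^2 u_j(0)=-\alpha\,u_1(0)$, I expect
\begin{equation*}
\mathrm{Re}\,(\mathcal{A}\underline{u},\underline{u})_{\mathbb{L}^2(\mathcal{T})}
= -\left(\alpha-\frac{N}{2}\right)|u_1(0)|^2 - \frac12\sum_{j=1}^N |\partial_x u_j(0)|^2 \le 0,
\end{equation*}
the inequality coming from $\alpha>N/2$. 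This identity is exactly the right-hand side of the dissipation law (\ref{dissipae1}) without the nonlinear term, so applied to $\frac{d}{dt}\,\tfrac12\|U\|^2=\mathrm{Re}\,(\mathcal{A}U,U)$ it yields (\ref{dissipae1}) for data in $\mathcal{D}(\mathcal{A})$; a density argument then propagates the energy decay to arbitrary $U_0\in\mathbb{L}^2(\mathcal{T})$.

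The crucial step is \emph{maximality}: proving that $I-\mathcal{A}$ is onto, i.e.\ that for every $\underline{f}\in\mathbb{L}^2(\mathcal{T})$ the resolvent problem $u_j+\partial_x u_j+\partial_x^3 u_j=f_j$, together with the end conditions at $\ell_j$, the continuity at the node, and the coupling condition, has a solution $\underline{u}\in\mathcal{D}(\mathcal{A})$. On each edge this is a linear constant-coefficient third-order ODE, whose solution is a particular solution (via variation of parameters from $f_j$) plus a combination of the three exponential modes, introducing $3N$ free constants. The $2N$ conditions at the $\ell_j$, the $N-1$ continuity conditions, and the single coupling condition give exactly $3N$ linear relations, so the problem reduces to a square $3N\times 3N$ linear system $M\vec{c}=\vec{b}$, with $\vec{b}$ depending on $\underline{f}$.

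The decisive observation is that the homogeneous system $M\vec{c}=0$ is precisely $(I-\mathcal{A})\underline{u}=0$ with $\underline{u}\in\mathcal{D}(\mathcal{A})$, and dissipativity forces it to have only the trivial solution: if $(I-\mathcal{A})\underline{u}=0$ then $\|\underline{u}\|^2=\mathrm{Re}\,(\mathcal{A}\underline{u},\underline{u})\le 0$, so $\underline{u}=0$ and hence $\vec{c}=0$. Therefore $M$ is invertible and the inhomogeneous system is uniquely solvable, which gives surjectivity of $I-\mathcal{A}$; Lumer--Phillips then yields the semigroup and the stated well-posedness. I expect the main obstacle to lie here, in organizing the resolvent problem as a genuinely square system and justifying rigorously that its solvability is equivalent to the injectivity already guaranteed by dissipativity, so that one never has to invert the $3N\times 3N$ matrix by hand; an alternative route, should the direct argument prove delicate, would be a Lax--Milgram formulation on a suitable energy space.
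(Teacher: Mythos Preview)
Your argument is correct. The dissipativity computation matches the paper's exactly, and your maximality argument via the resolvent equation is sound: the characteristic polynomial $1+\mu+\mu^3=0$ has three distinct roots, so the general solution on each edge really does involve three free constants; the count of boundary/node conditions is right; and the reduction of injectivity of the $3N\times 3N$ matrix to injectivity of $I-\mathcal{A}$ (hence to dissipativity) is clean and valid.

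The paper, however, takes a different and somewhat shorter route to maximality. Rather than solving the resolvent problem, it identifies the adjoint $\mathcal{A}^*$ explicitly, with domain
\[
\mathcal{D}(\mathcal{A}^*)=\Bigl\{\underline{v}\in\mathbb{H}^1_e(\mathcal{T})\cap\prod_j H^3(0,\ell_j):\ v_j'(0)=0,\ \sum_j v_j''(0)=(\alpha-N)v_1(0)\Bigr\},
\]
and checks by the same integration by parts that $\mathcal{A}^*$ is dissipative as well. Together with closedness of $\mathcal{A}$ (stated as clear), the corollary of Lumer--Phillips asserting that a closed densely defined operator with both $\mathcal{A}$ and $\mathcal{A}^*$ dissipative is m-dissipative finishes the argument. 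This avoids any ODE bookkeeping or matrix counting, at the cost of having to guess and verify the adjoint domain. Your approach buys you a constructive resolvent and sidesteps the adjoint computation entirely; the paper's approach is more structural and also yields, as a byproduct, the adjoint operator that is needed later for the controllability section.
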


\begin{proof}
The operator $\mathcal A$ is clearly closed. Let $\underline{u}\in  {\mathcal D}({\mathcal A})$, then by using some integration by parts, we get,
\begin{equation*}
\begin{split}
(\underline u^T, \mathcal A \underline u^T)&=\sum_{j=1}^N\int_0^{\ell_j}u_j(-\overline u_{jx}-\overline u_{jxxx})dx\\
&= \left(\frac{N}{2}-\alpha \right) \abs{u_1(0)}^2-\frac{1}{2}\sum_{j=1}^N \abs{\partial_x u_{j}(0)}^2\leq 0.
\end{split}
\end{equation*}
Thus $\mathcal A$ is dissipative. 

\noindent The adjoint operator of $\mathcal A$ is defined by $\mathcal A^*\underline v^T:=  \, \left(D_{\mathcal{T}} + D^3_{\mathcal{T}} \right) \underline{v}^T,$ 
with
$$
{\mathcal D}({\mathcal A^*}) = \left\{\begin{array}{l}\underline{v} = (v_1,\ldots,v_N) \in  {\mathbb H}^1_e (\mathcal{T}) \ds \cap\prod_{j=1}^N H^3(0,\ell_j), \\
\ds\frac{d v_j}{dx}(0)=0,\, 
\ds \sum_{j=1}^N \ds \frac{d^2v_j}{dx^2} (0) = (\alpha-N) \, v_1(0) \end{array}\right\}.
$$
In the same manner, we obtain,
\begin{equation*}
(\underline v^T,\mathcal A^*\underline v^T)= \left(\frac{N}{2}-\alpha \right) \abs{v_1(0)}^2-\frac{1}{2}\sum_{j=1}^N \abs{\partial_x v_{j}(\ell_j)}^2\leq 0,
\end{equation*}
hence $\mathcal A^*$ is also dissipative and then $\mathcal A$ generates a strongly semi-group of contractions on ${\mathbb L}^2 (\mathcal{T})$. We denote by $S$ this semi-group.

\end{proof}

We also need some regularity results for the solution of the linear equation with some extra boundary conditions, 
\begin{equation}\label{KdV-g1}
\left \{
\begin{array}{ll}
(\partial_t u_{j} + \partial_x u_{j} + \partial^3_x u_j)(t,x)=0,&\, \forall \, x \in(0,\ell_j),\, t\in(0,\infty),\, j = 1,\ldots,N, \\
u_{j}(t,0)=u_{k}(t,0),&\, \forall \, j,k = 1,\ldots,N, \, t > 0, \\
\ds \sum_{j=1}^N \partial^2_{x} u_j (t,0) = - \alpha \, u_1(t,0)+g(t),& \, \forall  \, t > 0, \\
u_j(t,\ell_j) = \partial_x u_j (t,\ell_j) = 0,& \, \forall \, t > 0, \, j= 1,\ldots,N, \\
u_j(0,x)=u_j^0(x), \, \forall \, x \in (0,\ell_j),&\,   j=1,\ldots,N.
\end{array}
\right.
\end{equation}
\begin{proposition}
Let $(\underline u^0,g)\in \mathcal D(\mathcal A)\times C^2_0([0,T])$, where $\ds C^2_0([0,T]):=\{\varphi\in C^2([0,T]), \varphi(0)=0\}$. Then there exists a unique solution $\underline u\in C([0,T],\mathcal D(\mathcal A))\cap C^1([0,T]),\mathbb{L}^2(\mathcal T))$ of \eqref{KdV-g1}.

\end{proposition}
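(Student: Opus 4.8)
The plan is to eliminate the inhomogeneous node term $g(t)$ by a time-independent lift and recast \eqref{KdV-g1} as an inhomogeneous abstract Cauchy problem governed by the contraction semigroup $S$ generated by $\mathcal{A}$ in the previous proposition. First I would fix $\underline{\phi}=(\phi_1,\dots,\phi_N)\in\prod_{j=1}^N H^3(0,\ell_j)$ satisfying
\[
\phi_j(0)=0\ \ (1\le j\le N),\qquad \sum_{j=1}^N \phi_j''(0)=1,\qquad \phi_j(\ell_j)=\phi_j'(\ell_j)=0.
\]
Such a function is trivial to produce, e.g. take $\phi_j\equiv 0$ for $j\ge 2$ and let $\phi_1$ be any $H^3$ function on $[0,\ell_1]$ with $\phi_1(0)=\phi_1(\ell_1)=\phi_1'(\ell_1)=0$ and $\phi_1''(0)=1$. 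Note that $\underline{\phi}$ respects the continuity at the central node and the homogeneous conditions at the outer vertices, but not the domain relation $\sum_j\phi_j''(0)=-\alpha\,\phi_1(0)$, so $\underline{\phi}\notin\mathcal{D}(\mathcal{A})$; this is expected, since $\underline{\phi}$ carries the inhomogeneity.

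Writing $\underline{u}=\underline{w}+g(t)\,\underline{\phi}$, the condition $\phi_j(0)=0$ gives $w_j(t,0)=u_j(t,0)$, so the continuity $w_j(t,0)=w_k(t,0)$ holds and
\[
\sum_{j=1}^N \partial^2_x w_j(t,0)=\sum_{j=1}^N \partial^2_x u_j(t,0)-g(t)=-\alpha\,u_1(t,0)=-\alpha\,w_1(t,0);
\]
together with $w_j(t,\ell_j)=\partial_x w_j(t,\ell_j)=0$ this shows $\underline{w}(t)\in\mathcal{D}(\mathcal{A})$. Substituting the decomposition into \eqref{KdV-g1} converts it into
\[
\underline{w}'=\mathcal{A}\underline{w}+\underline{f}(t),\qquad \underline{w}(0)=\underline{u}^0,
\]
where $\underline{f}(t)=-g(t)\,(D_{\mathcal{T}}+D^3_{\mathcal{T}})\underline{\phi}-g'(t)\,\underline{\phi}$, and the initial value is correct because $g(0)=0$, so that indeed $\underline{w}(0)=\underline{u}^0\in\mathcal{D}(\mathcal{A})$.

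Finally I would invoke the inhomogeneous semigroup theory. Since $(D_{\mathcal{T}}+D^3_{\mathcal{T}})\underline{\phi}$ and $\underline{\phi}$ are fixed elements of $\mathbb{L}^2(\mathcal{T})$ and $g\in C^2_0([0,T])$, the forcing term satisfies $\underline{f}\in C^1([0,T],\mathbb{L}^2(\mathcal{T}))$; this is exactly where the $C^2$ regularity of $g$ is used. As $\mathcal{A}$ generates the $C_0$-contraction semigroup $S$ and $\underline{u}^0\in\mathcal{D}(\mathcal{A})$, the classical theory of inhomogeneous Cauchy problems with a $C^1$ source (see \cite{Pazy}) yields a unique
\[
\underline{w}(t)=S(t)\underline{u}^0+\int_0^t S(t-s)\underline{f}(s)\,ds\in C([0,T],\mathcal{D}(\mathcal{A}))\cap C^1([0,T],\mathbb{L}^2(\mathcal{T})).
\]
Then $\underline{u}=\underline{w}+g\,\underline{\phi}$ solves \eqref{KdV-g1} with the stated regularity (understood at the level of the spatial smoothness of $\mathcal{D}(\mathcal{A})$, since the modified node relation prevents $\underline{u}(t)$ from lying literally in $\mathcal{D}(\mathcal{A})$), and uniqueness is inherited from that of the mild solution together with the linearity of the lift.

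The step I expect to be the crux is not any single estimate but getting the lift exactly right: one must annihilate the \emph{value} $u_1(0)$ contribution (forcing $\phi_j(0)=0$, not merely $\sum_j\phi_j''(0)=1$), and one must verify that the resulting source $\underline{f}$ is $C^1$ in time, which is precisely why $g$ is assumed in $C^2_0([0,T])$ rather than merely continuous.
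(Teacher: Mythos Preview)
Your proposal is correct and follows essentially the same strategy as the paper: subtract a suitable lift $g(t)\underline{\phi}$ to absorb the inhomogeneous node term, obtain an abstract Cauchy problem $\underline{w}'=\mathcal{A}\underline{w}+\underline{f}$ with $\underline{f}\in C^1([0,T],\mathbb{L}^2(\mathcal{T}))$, and invoke the classical semigroup result from \cite{Pazy}. The only difference is cosmetic: the paper takes the explicit symmetric lift $\phi_j(x)=\dfrac{(x-\ell_j)^2}{\ell_j^2\bigl(2\sum_i\ell_i^{-2}+\alpha\bigr)}$ with $\phi_j(0)\neq 0$ (so that $\sum_j\phi_j''(0)+\alpha\phi_1(0)=1$), whereas you impose $\phi_j(0)=0$ and $\sum_j\phi_j''(0)=1$; both choices yield $\underline{w}(t)\in\mathcal{D}(\mathcal{A})$ and the same conclusion.
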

\begin{proof}
We first define the functions $\ds \phi_j(x):=\frac{(x-\ell_j)^2}{\ell_j^2\left(2\ds\sum_{i=1}^N \ell_i^{-2}+\alpha \right)}$. Thus $\phi_j\in C^\infty([0,\ell_j])$ and satisfies,
$$\left\{\begin{array}{ll}
\phi_j(\ell_j)=\phi_j'(\ell_j)=0,&\forall j=1,\ldots,N,\\
\phi_j(0)=\frac{1}{2\ds\sum_{i=1}^N\ell_i^{-2}+\alpha}=\phi_k(0),\, &\forall j,k=1,\ldots,N,\\
\ds \sum_{j=1}^N \phi_j''(0)=1-\alpha \phi_1(0).&
\end{array}\right.$$

We define $\underline z:=\underline u-g\underline \phi$, then $\underline z$ satisfies the system:
\begin{equation}\label{KdV-z}
\left \{
\begin{array}{ll}
(\partial_t z_{j} + \partial_x z_{j} + \partial^3_x z_j)(t,x)=-\phi_j(x) g'(t)-&(\phi_j'+\phi_j''')(x)g(t),\\
 &\forall \, x \in(0,\ell_j),\, t>0),\, j = 1,\ldots,N, \\
z_{j}(t,0)=z_{k}(t,0), \,& \forall \, j,k = 1,\ldots,N, \, t > 0, \\
\ds \sum_{j=1}^N \partial^2_{x} z_j (t,0) = - \alpha \, z_1(t,0), \, &\forall \,  t > 0, \\
z_j(t,\ell_j) = \partial_x z_j (t,\ell_j) = 0, \, \forall \,& t > 0, \, j= 1,\ldots,N, \\
z_j(0,x)=u_j^0(x), \, \forall \, x \in (0,\ell_j),\, &  j=1,\ldots,N.
\end{array}
\right.
\end{equation}

Thus, as $-\underline \phi g'-(\underline \phi'+\underline \phi''')g\in C^1([0,T],\mathbb{L}^2(\mathcal T))$, we deduce from Proposition \ref{3exist1} and classical results on semi-group theory, that system \eqref{KdV-z} admits a unique classical solution $\underline z\in C([0,T],\mathcal D(\mathcal A))\cap C^1([0,T]),\mathbb{L}^2(\mathcal T))$. Hence we can easily prove that problem \eqref{KdV-g1} admits a unique classical solution $\underline u\in C([0,T],\mathcal D(\mathcal A))\cap C^1([0,T]),\mathbb{L}^2(\mathcal T))$.

\end{proof}

Now, we study the same system but with less regular data.
\begin{proposition}\label{well_posed_LKdV}
Let $(\underline u^0,g)\in \mathbb{L}^2(\mathcal T)\times L^2(0,T)$, then there exists a unique mild solution of \eqref{KdV-g}, $\underline{u} \in \mathbb{B}$. Furthermore $\underline{u}(.,0)$ and $\partial_x \underline u(.,0)$ belong to $L^2(0,T)$ and we have the following  estimates,
\begin{equation}\label{L2H1etCL2}
\norm{\underline u}{\mathbb{B}}^2 
\leq C(T,N,L,\alpha) (\norm{\underline u^0}{\mathbb{L}^2(\mathcal T)}^2+\norm{g}{L^2(0,T)}^2),
\end{equation}
\begin{equation}\label{trace-u-u_x}
\begin{split}
\norm{u_1(.,0)}{L^2(0,T)}^2&\leq \frac{1}{\alpha-\frac{N}{2}}\norm{\underline u^0}{\mathbb{L}^2(\mathcal T)}^2+\frac{1}{(\alpha-\frac{N}{2})^2}\norm{g}{L^2(0,T)}^2,\\
\norm{\partial_x \underline u(.,0)}{L^2(0,T)}^2&\leq \norm{\underline u^0}{\mathbb{L}^2(\mathcal T)}^2+\frac{1}{\alpha-\frac{N}{2}}\norm{g}{L^2(0,T)}^2,
\end{split}
\end{equation}
\begin{equation}\label{u^0_bis}
\norm{\underline u^0}{\mathbb{L}^2(\mathcal{T})}^2\leq\frac{1}{T}\norm{\underline u}{L^2(0,T,\mathbb{L}^2(\mathcal T))}^2+3(\alpha-\frac{N}{2})\norm{ u_1(.,0)}{L^2(0,T)}^2+\norm{\partial_x\underline u(.,0)}{L^2(0,T)}^2+\frac{1}{\alpha-\frac{N}{2}}\norm{g}{L^2(0,T)}^2.
\end{equation}
\end{proposition}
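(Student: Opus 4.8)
The plan is to obtain all three assertions from a single family of multiplier (energy) identities, first established for smooth data and then propagated to $\mathbb{L}^2(\mathcal T)\times L^2(0,T)$ data by density. By the previous proposition, for $(\underline u^0,g)\in\mathcal D(\mathcal A)\times C^2_0([0,T])$ the system \eqref{KdV-g1} has a classical solution, so all the integrations by parts below are legitimate; the estimates \eqref{L2H1etCL2}--\eqref{u^0_bis} are then \emph{a priori} bounds proved for these smooth solutions and closed on the general case afterwards.

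First I would multiply the $j$-th equation by $u_j$, integrate over $(0,\ell_j)$, sum over $j$, and use the coupling condition $u_j(t,0)=u_1(t,0)$ together with the feedback relation $\sum_j\partial_x^2 u_j(t,0)=-\alpha u_1(t,0)+g(t)$ and the conditions at $x=\ell_j$. This yields the dissipation identity
\be
\frac{d}{dt}E(t)=-\Bigl(\alpha-\tfrac N2\Bigr)|u_1(t,0)|^2+g(t)\,u_1(t,0)-\tfrac12\sum_{j=1}^N|\partial_x u_j(t,0)|^2 .
\ee
Integrating in $t$ and discarding the nonnegative trace term, a Young inequality on $\int_0^T g\,u_1(\cdot,0)$ that absorbs the $|u_1(\cdot,0)|^2$ contribution gives the first line of \eqref{trace-u-u_x}; keeping instead the $\partial_x$-trace and bounding the indefinite term by its pointwise maximum in $u_1(\cdot,0)$ gives the second line. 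The same identity, with $E(t)\le E(0)+\int_0^T|g|\,|u_1(\cdot,0)|$ and the trace bound just obtained, controls $\max_{[0,T]}E(t)$, i.e. the $C([0,T],\mathbb{L}^2(\mathcal T))$ part of \eqref{L2H1etCL2}.

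Next, to recover the $L^2(0,T;\mathbb H^1_e(\mathcal T))$ norm I would use the KdV smoothing multiplier $x\,u_j$: multiplying the $j$-th equation by $x u_j$, integrating over $(0,\ell_j)\times(0,T)$ and summing produces an identity whose dominant term is $\tfrac32\int_0^T\sum_j\int_0^{\ell_j}|\partial_x u_j|^2$, the remaining pieces being $\sum_j\int x u_j^2$ evaluated at $t=0,T$ (bounded via $x\le L$), the space-time $\mathbb{L}^2$ norm already controlled, and the boundary product $\int_0^T u_1(\cdot,0)\sum_j\partial_x u_j(\cdot,0)$ (handled by Cauchy--Schwarz in $j$ and the two trace estimates). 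This closes \eqref{L2H1etCL2}. For the reverse inequality \eqref{u^0_bis} I would integrate the dissipation identity once more in time: writing $E(t)=E(0)+\int_0^t F$ with $F$ the right-hand side above and integrating over $[0,T]$ gives, by Fubini, $T\,E(0)=\int_0^T E(t)\,dt-\int_0^T(T-s)F(s)\,ds$; using $0\le T-s\le T$, a Young inequality with parameter $\alpha-\tfrac N2$ on the $g$-term, and dividing by $T$ reproduces exactly the constants $3(\alpha-\tfrac N2)$, $1$ and $\tfrac1{\alpha-N/2}$ of \eqref{u^0_bis}.

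Finally I would pass to the limit. Choosing $(\underline u^0_n,g_n)\in\mathcal D(\mathcal A)\times C^2_0([0,T])$ with $\underline u^0_n\to\underline u^0$ in $\mathbb{L}^2(\mathcal T)$ and $g_n\to g$ in $L^2(0,T)$, the linear estimates \eqref{L2H1etCL2}--\eqref{trace-u-u_x} applied to the differences $\underline u_n-\underline u_m$ show that $(\underline u_n)$ is Cauchy in $\mathbb{B}$ and that the traces $u_{n,1}(\cdot,0)$, $\partial_x\underline u_n(\cdot,0)$ are Cauchy in $L^2(0,T)$; the limit is the sought mild solution, inherits all three estimates, and is unique because the same estimates force a solution with zero data to vanish. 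The main obstacle is the trace (hidden) regularity: that $\partial_x\underline u(\cdot,0)\in L^2(0,T)$ for merely $\mathbb{L}^2(\mathcal T)$ data does not follow from the semigroup alone but from the sign structure of the dissipation identity, so the delicate point is to carry out the multiplier computations so that the boundary contributions assemble, through the coupling and feedback relations, into the controllable combination $(\alpha-\tfrac N2)|u_1(\cdot,0)|^2+\tfrac12\sum_j|\partial_x u_j(\cdot,0)|^2$.
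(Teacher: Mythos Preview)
Your proof is correct and matches the paper's approach: the paper likewise works with smooth data, uses the multipliers $q_j=1$, $q_j=x$, and $q_j=T-t$ (your double integration of the energy identity is exactly the $T-t$ multiplier rewritten), and closes by density. The only cosmetic difference is that the paper first records a general multiplier identity for arbitrary smooth $q_j$ and then specializes, whereas you carry out the three computations separately.
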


\begin{proof}
The proof of this result is obtained by a density argument and the multiplier method. We first suppose that $(\underline u^0,g)\in \mathcal D(\mathcal A)\times C^2_0([0,T])$ and thus the solution of \eqref{KdV-g1} satisfies  $\underline u\in  C([0,T],\mathcal D(\mathcal A))\cap C^1([0,T]),\mathbb{L}^2(\mathcal T))$.

Let $\underline q=(q_j)_{j=1,\ldots,N} \in \ds \prod_{j=1}^N C^\infty([0,T]\times [0,\ell_j]; \mathbb{R})$.Then by multiplying $(LKdV)$ by $q_j \bar u_j$, integrating on $[0,s]\times [0,\ell_j]$ with $s\in [0,T]$ and using some integrations by parts we get the following equation,

\begin{equation}\label{multiplier}
\begin{split}
&\sum_{j=1}^N\int_0^{\ell_j}[q_j\abs{u_j}^2]_0^s dx-\sum_{j=1}^N\int_0^s\int_0^{\ell_j}(\partial_t q_{j}+\partial_x q_{j}+\partial^3_x q_{j})\abs{u_j}^2dxdt+3\sum_{j=1}^N\int_0^s\int_0^{\ell_j}\partial_x q_{j}\abs{\partial_x u_j}^2 \, dx \, dt \\
&=\sum_{j=1}^N\int_0^s \bigg((q_j+\partial^2_x q_j)\abs{u_j}^2+2q_ju_j\partial^2_x \bar{u}_j-2\partial_xq_ju_j\partial_x \bar{u}_j-q_j\abs{\partial_x u_j}^2 \bigg)(t,0) \, dt.
\end{split}
\end{equation}  

\begin{enumerate}
\item Taking first $\underline q= \underline{1}$, then \eqref{multiplier} becomes,
\begin{multline*}
\sum_{j=1}^N \int_0^{\ell_j} \abs{u_j (s,x)}^2 \, dx + \int_0^s \sum_{j=1}^N \abs{\partial_x u_j(t,0)}^2 \, dt + (2\alpha-N) \int_0^s\abs{u_1(t,0)}^2 \, dt=
\\
\sum_{j=1}^N\int_0^{\ell_j}\abs{u_j (0,x)}^2dx+2\int_0^s \bar u_1(t,0)g(t)dt \\
\leq \sum_{j=1}^N\int_0^{\ell_j}\abs{u_j (0,x)}^2\,dx+ \left(\alpha-\frac{N}{2} \right) \int_0^s\abs{u_1(t,0)}^2 \, dt+\frac{1}{\alpha-\frac{N}{2}}\int_0^s g^2(t)\,dt .
\end{multline*}
So we have,
 \begin{equation}\label{inequ_1}
\sum_{j=1}^N \int_0^{\ell_j} \abs{u_j (s,x)}^2 \, dx + \int_0^s \sum_{j=1}^N \abs{\partial_x u_j(t,0)}^2 \, dt + (\alpha-\frac{N}{2}) \int_0^s\abs{u_1(t,0)}^2 \, dt\leq 
\end{equation}
$$
\sum_{j=1}^N\int_0^{\ell_j}\abs{u_j (0,x)}^2\,dx+\frac{1}{\alpha-\frac{N}{2}}\int_0^T g^2(t)\,dt.
$$

\noindent Thus, $\underline u(s,.)\in \mathbb L^2(\mathcal T)$ and we have the estimate,\begin{equation}\label{contraction}
\max_{s\in [0,T]}\norm{\underline u(s,.)}{\mathbb L^2(\mathcal T)}^2\leq \norm{\underline u^0}{\mathbb L^2(\mathcal T)}^2+\frac{1}{\alpha-\frac{N}{2}}\norm{g}{L^2(0,T)}^2.
\end{equation}
\noindent Taking $s=T$ in inequality \eqref{inequ_1} gives that  $u_1(.,0)\in L^2(0,T)$ and $\partial_x u_j(.,0)\in L^2(0,T)$, for all $j=1,\ldots,N$ and we have the estimates,
\begin{equation}\label{trace}
\begin{split}
\norm{u_1(.,0)}{L^2(0,T)}^2&\leq \frac{1}{\alpha-\frac{N}{2}}\norm{\underline u^0}{\mathbb{L}^2(\mathcal T)}^2+\frac{1}{(\alpha-\frac{N}{2})^2}\norm{g}{L^2(0,T)}^2,\\
\norm{\partial_x \underline u(.,0)}{L^2(0,T)}^2&\leq \norm{\underline u^0}{\mathbb{L}^2(\mathcal T)}^2+\frac{1}{\alpha-\frac{N}{2}}\norm{g}{L^2(0,T)}^2.
\end{split}
\end{equation}

\item Secondly, we take $q_j(t,x)=x$ for $j=1,\ldots,N$ and  $s=T$ then equation \eqref{multiplier} gives us,
\begin{equation*}
\begin{split}
&\sum_{j=1}^N\int_0^{\ell_j}x[\abs{u_j}^2]_0^T dx-\sum_{j=1}^N\int_0^T\int_0^{\ell_j}\abs{u_j}^2dxdt+3\sum_{j=1}^N\int_0^T\int_0^{\ell_j}\abs{\partial_xu_j}^2dxdt\\
&=\sum_{j=1}^N\int_0^T\bigg(-2u_j\partial_x \bar{u}_j \bigg)(t,0)dt.
\end{split}
\end{equation*}
Then we have,

\begin{multline*}
\sum_{j=1}^N\int_0^T\int_0^{\ell_j}\abs{\partial_xu_j}^2dxdt\leq \frac{L}{3}\sum_{j=1}^N\int_0^{\ell_j}\abs{u_j(0,x)}^2dx\\+\frac{1}{3}\sum_{j=1}^N\int_0^T\int_0^{\ell_j}\abs{u_j}^2 \, dx \, dt + \frac{1}{3} \sum_{j=1}^N\int_0^T\bigg(-2u_j\partial_x \bar{u}_j \bigg)(t,0) \, dt.
\end{multline*}
Using estimates \eqref{contraction} and \eqref{trace}, we can deduce the following estimate,
\begin{equation}\label{H1_estimate}
\norm{\partial_x \underline u}{L^2([0,T],\mathbb{L}^2(\mathcal T))}^2\leq C(T,L,N,\alpha)\left(\norm{\underline u^0}{\mathbb{L}^2(\mathcal T)}^2+\norm{g}{L^2(0,T)}^2\right).
\end{equation}

\item Lastly, we choose $q_j(t,x)= T-t$ for $j=1,\ldots,N$ and $s=T$ then we obtain the equation,

\begin{equation*}\begin{split}
T\sum_{j=1}^N\int_0^{\ell_j}\abs{u_j(0,x)}^2dx&=\sum_{j=1}^N\int_0^T\int_0^{\ell_j}\abs{u_j}^2 \, dx \, dt + (2\alpha-N) \int_0^T(T-t)\abs{u_1(t,0)}^2dt  \\
&+\sum_{j=1}^N\int_0^T(T-t)\abs{\partial_x u_j(t,0)}^2dt-2\int_0^T (T-t)\bar u_1(t,0)g(t)dt,
\end{split}\end{equation*}
and then we easily get
\begin{equation*}
\norm{\underline u^0}{\mathbb{L}^2(\mathcal{T})}^2\leq\frac{1}{T}\norm{\underline u}{L^2(0,T,\mathbb{L}^2(\mathcal T))}^2+3(\alpha-\frac{N}{2})\norm{ u_1(.,0)}{L^2(0,T)}^2+\norm{\partial_x\underline u(.,0)}{L^2(0,T)}^2+\frac{1}{\alpha-\frac{N}{2}}\norm{g}{L^2(0,T)}^2.
\end{equation*}
\end{enumerate}

By the density of $\mathcal D(\mathcal A)$ in $\mathbb{L}^2(\mathcal T)$ and of $C^2_0([0,T])$ in $L^2(0,T)$, and by using inequalities \eqref{contraction}, \eqref{trace} and \eqref{H1_estimate}, we get the desired result.
\end{proof}

Before proving the well-posedness of $(KdV)$ we need also a result of regularity for the linear system with a source term.

\begin{proposition}\label{well_posed_source_term}
Let $(\underline u_0,\underline f,g)\in \mathbb{L}^2(\mathcal T)\times L^1(0,T,\mathbb{L}^2(\mathcal T))\times L^2(0,T)$, then there exists a unique mild solution  $\underline{u} \in \mathbb B$ of
\begin{equation}\label{KdV-source}
\left \{
\begin{array}{ll}
(\partial_t u_{j} + \partial_x u_{j} + \partial^3_x u_j)(t,x)=f_j,&\, \forall \, x \in(0,\ell_j),\, t>0,\, j = 1,\ldots,N, \\
u_{j}(t,0)=u_{k}(t,0),&\, \forall \, j,k = 1,\ldots,N, \, t > 0, \\
\ds \sum_{j=1}^N \partial^2_{x} u_j (t,0) = - \alpha \, u_1(t,0)+g(t),& \, \forall  \, t > 0, \\
u_j(t,\ell_j) = \partial_x u_j (t,\ell_j) = 0,& \, \forall \, t > 0, \, j= 1,\ldots,N, \\
u_j(0,x)=u_j^0(x), \, \forall \, x \in (0,\ell_j),&\,   j=1,\ldots,N,
\end{array}
\right.
\end{equation}
and it satisfies,
\begin{equation}\label{ineq_source}
\norm{\underline u}{\mathbb B}^2 
\leq C(T,N,L,\alpha) (\norm{\underline u^0}{\mathbb{L}^2(\mathcal T)}^2+\norm{\underline f}{L^1(0,T,\mathbb{L}^2(\mathcal T))}^2+\norm{g}{L^2(0,T)}^2).
\end{equation}
\end{proposition}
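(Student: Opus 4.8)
The plan is to reduce this to the homogeneous boundary-forcing result of Proposition \ref{well_posed_LKdV} by splitting off the source term through Duhamel's formula, and then to recover the full $\mathbb{B}$-estimate \eqref{ineq_source} by repeating the multiplier computation of the previous proof with the inhomogeneity $\underline f$ carried along. By linearity, write $\underline u=\underline v+\underline w$, where $\underline v\in\mathbb{B}$ is the solution of \eqref{KdV-g1} with data $(\underline u^0,g)$ and no source (furnished by Proposition \ref{well_posed_LKdV}, together with its estimates \eqref{contraction}, \eqref{trace} and \eqref{H1_estimate}), and $\underline w$ solves the same system with $\underline u^0=0$, $g=0$ and source $\underline f$.

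First I would construct $\underline w$ by the variation-of-constants formula $\underline w(t)=\int_0^t S(t-s)\underline f(s)\,ds$, where $S$ is the contraction semigroup generated by $\mathcal A$ (Proposition \ref{3exist1}). Since $\|S(t)\|\le 1$, this gives directly $\underline w\in C([0,T],\mathbb{L}^2(\mathcal T))$ with $\|\underline w(t)\|_{\mathbb{L}^2(\mathcal T)}\le\|\underline f\|_{L^1(0,T,\mathbb{L}^2(\mathcal T))}$, so the sup-in-time part of the norm is immediate, and uniqueness of the mild solution follows from linearity and the uniqueness already established for the homogeneous problem. The missing ingredient is the Kato smoothing bound $\underline w\in L^2(0,T;\mathbb H^1_e(\mathcal T))$, which is exactly the part the semigroup estimate does not see.

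To obtain it, and \eqref{ineq_source} as a whole, I would work first with smooth data $(\underline u^0,\underline f,g)\in\mathcal D(\mathcal A)\times C^1([0,T],\mathbb{L}^2(\mathcal T))\times C^2_0([0,T])$, for which semigroup theory (as in the two preceding propositions) yields a classical solution, and run the multiplier identity \eqref{multiplier} with the extra right-hand side term $2\sum_{j=1}^N\int_0^s\int_0^{\ell_j}f_j q_j\overline{u}_j\,dx\,dt$. With $\underline q=\underline 1$ this produces, alongside the boundary terms treated before, the source contribution $2\sum_j\int_0^s\int_0^{\ell_j}f_j\overline{u}_j\,dx\,dt$, which I would bound in modulus by $2\,\|\underline u\|_{C([0,T],\mathbb{L}^2(\mathcal T))}\,\|\underline f\|_{L^1(0,T,\mathbb{L}^2(\mathcal T))}$ via Cauchy--Schwarz in space and $L^1$--$L^\infty$ in time; absorbing this through the elementary implication $M^2\le A+2MB\Rightarrow M^2\le 2A+4B^2$ then yields the $C([0,T],\mathbb{L}^2(\mathcal T))$ bound and the trace bounds on $u_1(\cdot,0)$ and $\partial_x\underline u(\cdot,0)$, now augmented by $\|\underline f\|_{L^1(0,T,\mathbb{L}^2(\mathcal T))}^2$. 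Taking next $q_j(t,x)=x$ and $s=T$ reproduces the $\mathbb H^1_e$ identity with the additional source term $-2\sum_j\int_0^T\int_0^{\ell_j}x\,f_j\overline{u}_j\,dx\,dt$, controlled by $2L\,\|\underline u\|_{C([0,T],\mathbb{L}^2(\mathcal T))}\,\|\underline f\|_{L^1(0,T,\mathbb{L}^2(\mathcal T))}$, while the interior trace $-2u_j\partial_x\overline{u}_j(t,0)$ is handled, exactly as in Proposition \ref{well_posed_LKdV}, by Cauchy--Schwarz together with the already-established trace estimates. Combining the two choices of $\underline q$ gives \eqref{ineq_source} for smooth data.

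Finally I would pass to general $(\underline u^0,\underline f,g)\in\mathbb{L}^2(\mathcal T)\times L^1(0,T,\mathbb{L}^2(\mathcal T))\times L^2(0,T)$ by density, using that $\mathcal D(\mathcal A)$ is dense in $\mathbb{L}^2(\mathcal T)$, $C^2_0([0,T])$ in $L^2(0,T)$ and $C^1([0,T],\mathbb{L}^2(\mathcal T))$ in $L^1(0,T,\mathbb{L}^2(\mathcal T))$: the a priori bound \eqref{ineq_source} shows the approximating solutions form a Cauchy sequence in $\mathbb{B}$, whose limit is the desired mild solution and inherits the estimate. The main obstacle is the smoothing step, namely producing the $L^2(0,T;\mathbb H^1_e(\mathcal T))$ control through the $q_j=x$ multiplier while keeping both the source contribution and the boundary traces under control; the $C([0,T],\mathbb{L}^2(\mathcal T))$ part and the quadratic-absorption argument are routine.
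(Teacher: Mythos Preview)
Your proposal is correct and follows essentially the same route as the paper: reduce via linearity to the pure source-term problem (the paper simply sets $(\underline u^0,g)=(\underline 0,0)$ and invokes Proposition~\ref{well_posed_LKdV}), use the Duhamel formula with the contraction semigroup to get the $C([0,T],\mathbb L^2(\mathcal T))$ bound, then run the multipliers $q_j=1$ and $q_j=x$ to recover the trace and $\mathbb H^1_e$ estimates. The only cosmetic difference is that you carry all three data through the multiplier identities and close by density, whereas the paper isolates the $\underline f$-contribution and works directly with the mild solution; both lead to \eqref{ineq_source} in the same way.
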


\begin{proof}
Thanks to Proposition \ref{well_posed_LKdV}, we consider that $(\underline u^0,g)=(\underline 0,0)$. By using standard semi-group theory, we get that if $f\in L^1(0,T,\mathbb{L}^2(\mathcal T))$ then the solution of \eqref{KdV-source} verifies $\underline u\in C([0,T], \mathbb L^2(\mathcal T))$ and 
\begin{equation*}
\norm{\underline u}{C([0,T], \mathbb L^2(\mathcal T))}\leq C\norm{f}{L^1(0,T,\mathbb{L}^2(\mathcal T))}.
\end{equation*}

As before we multiply the PDE in \eqref{KdV-source} by $\bar u_j$ and we integrate by parts on $[0,T]\times (0,\ell_j)$.
We easily obtain that,
\begin{equation*}
\begin{split}
\sum_{j=1}^N\int_0^{\ell_j}\abs{u_j(T,x)}^2dx+(2\alpha-N)\int_0^T\abs{u_1(t,0)}^2dt+\sum_{j=1}^N\int_0^T\abs{\partial_x u_j(t,0)}^2dt\leq 
\\
2\sum_{j=1}^N\int_0^T \norm{f_j(t,.)}{L^2(0,\ell_j)} \norm{u_j(t,.)}{L^2(0,\ell_j)}dt\\
\leq 2 \norm{\underline u}{C([0,T], \mathbb L^2(\mathcal T))}\norm{f}{L^1(0,T,\mathbb{L}^2(\mathcal T))}\\
\leq C\norm{f}{L^1(0,T,\mathbb{L}^2(\mathcal T))}^2.
\end{split}\end{equation*}

Thus 
\begin{equation}\label{estim_node}
\norm{u_1(t,0)}{L^2(0,T)}^2+\norm{\partial_x \underline u(t,0)}{L^2(0,T)}^2\\
\leq C\norm{f}{L^1(0,T,\mathbb{L}^2(\mathcal T))}^2.
\end{equation}

Next, we multiply the PDE in \eqref{KdV-source} by $x\bar u_j$ and integrate by parts. We obtain,
\begin{equation*}
\begin{split}
&\sum_{j=1}^N\int_0^{\ell_j}x\abs{u_j(T,x)}^2 dx-\sum_{j=1}^N\int_0^T\int_0^{\ell_j}\abs{u_j}^2dxdt+3\sum_{j=1}^N\int_0^T\int_0^{\ell_j}\abs{\partial_xu_j}^2dxdt\\
&=2\sum_{j=1}^N \int_0^T\int_0^{\ell_j}x\bar u_jf_jdxdt+\sum_{j=1}^N\int_0^T\bigg(-2u_j\partial_x \bar{u}_j \bigg)(t,0)dt.
\end{split}
\end{equation*}
Thanks to \eqref{estim_node}, we obtain that,
\begin{equation*}\begin{split}
3\sum_{j=1}^N\int_0^T\int_0^{\ell_j}\abs{\partial_xu_j}^2dxdt&\leq T\norm{\underline u}{C([0,T], \mathbb L^2(\mathcal T))}^2+2L\norm{\underline u}{C([0,T], \mathbb L^2(\mathcal T))}\norm{f}{L^1(0,T,\mathbb{L}^2(\mathcal T))}\\
&\,+\norm{u_1(t,0)}{L^2(0,T)}^2+\norm{\partial_x \underline u(t,0)}{L^2(0,T)}^2\\
&\leq C\norm{f}{L^1(0,T,\mathbb{L}^2(\mathcal T))}^2.
\end{split}\end{equation*}

Which ends the proof.
\end{proof}

\subsection{Well-posedness of $(KdV)$ and regularity results}

In order to prove the well-posedness of the nonlinear KdV equation, we need some regularity on the nonlinearity appearing in the equation and at the central node.

\medskip

\noindent We first recall the following Proposition whose proof can be found in \cite[Proposition 4.1]{Rosier} or \cite[Proposition 4]{Cerpa}.

\begin{proposition}\label{uu_x}
Let $T,L>0$, let $y\in L^2(0,T;H^1(0,L))$. Then $yy_x\in L^1(0,T;L^2(0,L))$ and the map $y\in L^2(0,T;H^1(0,L))\mapsto yy_x\in L^1(0,T;L^2(0,L))$ is continuous. Moreover, we have 
\begin{equation}
\norm{yy_x}{L^1(0,T;L^2(0,L))}\leq C\norm{y}{L^2(0,T;H^1(0,L))}^2.
\end{equation}

\end{proposition}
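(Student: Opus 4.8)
The plan is to reduce everything to a single pointwise-in-time estimate and then integrate in $t$. Fix $t\in(0,T)$ and work with $y(t,\cdot)\in H^1(0,L)$. By H\"older's inequality in $x$,
$$
\norm{y(t)\,y_x(t)}{L^2(0,L)}=\left(\int_0^L |y(t,x)|^2\,|y_x(t,x)|^2\,dx\right)^{1/2}\leq \norm{y(t)}{L^\infty(0,L)}\,\norm{y_x(t)}{L^2(0,L)}.
$$
The crucial ingredient is the one-dimensional Sobolev (Agmon) embedding $H^1(0,L)\hookrightarrow L^\infty(0,L)$, i.e. $\norm{y(t)}{L^\infty(0,L)}\leq C\,\norm{y(t)}{H^1(0,L)}$ with $C=C(L)$; this is exactly where $\dim=1$ is used. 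Combining the two displays gives the pointwise bound $\norm{y(t)\,y_x(t)}{L^2(0,L)}\leq C\,\norm{y(t)}{H^1(0,L)}^2$.

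Next I would integrate this over $(0,T)$. Since $t\mapsto \norm{y(t)}{H^1(0,L)}^2$ is integrable (by hypothesis $y\in L^2(0,T;H^1(0,L))$), the right-hand side is finite, which simultaneously proves measurability and integrability of $t\mapsto \norm{y(t)\,y_x(t)}{L^2(0,L)}$, hence $yy_x\in L^1(0,T;L^2(0,L))$, and yields
$$
\norm{yy_x}{L^1(0,T;L^2(0,L))}=\int_0^T \norm{y(t)\,y_x(t)}{L^2(0,L)}\,dt\leq C\int_0^T \norm{y(t)}{H^1(0,L)}^2\,dt= C\,\norm{y}{L^2(0,T;H^1(0,L))}^2,
$$
which is the claimed estimate.

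For the continuity statement I would exploit that the map is quadratic, so polarizing it makes it locally Lipschitz. Given $y_1,y_2\in L^2(0,T;H^1(0,L))$, I write $y_1\partial_x y_1-y_2\partial_x y_2=y_1\,\partial_x(y_1-y_2)+(y_1-y_2)\,\partial_x y_2$ and apply the same $L^\infty$--$L^2$ bound to each term pointwise in $t$, e.g. $\norm{y_1\,\partial_x(y_1-y_2)}{L^2(0,L)}\leq C\,\norm{y_1}{H^1(0,L)}\,\norm{y_1-y_2}{H^1(0,L)}$. Integrating in time and using the Cauchy--Schwarz inequality in $t$ gives
$$
\norm{y_1\partial_x y_1-y_2\partial_x y_2}{L^1(0,T;L^2(0,L))}\leq C\left(\norm{y_1}{L^2(0,T;H^1(0,L))}+\norm{y_2}{L^2(0,T;H^1(0,L))}\right)\norm{y_1-y_2}{L^2(0,T;H^1(0,L))},
$$
which establishes continuity (indeed local Lipschitz continuity) of the map.

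This argument is essentially routine; there is no serious obstacle, and the only point requiring care is the choice of embedding: one must use the \emph{one-dimensional} estimate $\norm{\cdot}{L^\infty}\leq C\norm{\cdot}{H^1}$ rather than a generic Sobolev inequality, since in higher dimension $H^1$ would not embed into $L^\infty$ and the pointwise product bound would fail. The tracking of the constant $C=C(L)$ through the embedding is the only thing to keep honest.
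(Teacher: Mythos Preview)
Your argument is correct and is exactly the standard proof of this fact: the $H^1(0,L)\hookrightarrow L^\infty(0,L)$ embedding followed by integration in time, plus the polarization identity for continuity. The paper does not actually supply a proof of this proposition but simply refers to \cite[Proposition~4.1]{Rosier} and \cite[Proposition~4]{Cerpa}, where the same argument appears, so there is nothing further to compare.
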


We also need the following proposition,
\begin{proposition}\label{u(.,0)}
Let $\underline u\in \mathbb B$, then $\abs{u_1(.,0)}^2\in L^2(0,T)$ and the map $\underline u\in \mathbb B\mapsto \abs{u_1(.,0)}^2\in L^2(0,T)$ is continuous.  Moreover, we have the estimate,
\begin{equation}\label{u_1}
\norm{u_1^2(.,0)}{L^2(0,T)}\leq \frac{1}{\sqrt{2}}\norm{\underline u}{\mathbb{B}}^2.
\end{equation}\end{proposition}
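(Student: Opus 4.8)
The goal is to control the $L^2(0,T)$-norm of $u_1^2(\cdot,0)$ for a function $\underline u \in \mathbb B$, and the structure of the bound \eqref{u_1} suggests a Gagliardo--Nirenberg-type interpolation at the boundary point $x=0$. The plan is to first express the pointwise value $u_1(t,0)^2$ as an integral of $\partial_x(u_1^2)$ over the interval $[0,\ell_1]$ using the boundary conditions, then square, integrate in time, and apply Cauchy--Schwarz to recover the two factors $\norm{\underline u}{C([0,T],\mathbb L^2(\mathcal T))}$ and $\norm{\underline u}{L^2(0,T;\mathbb H^1_e(\mathcal T))}$ that combine into $\norm{\underline u}{\mathbb B}^2$.

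Concretely, I would fix $t$ and use that $u_1(t,\ell_1)=0$ (from the membership in $\mathbb H^1_e(\mathcal T)$, where the $H^1_r$ component forces the value at the right endpoint to vanish) to write
\begin{equation*}
u_1(t,0)^2 = -\int_0^{\ell_1}\partial_x\bigl(u_1(t,x)^2\bigr)\,dx = -2\int_0^{\ell_1}u_1(t,x)\,\partial_x u_1(t,x)\,dx.
\end{equation*}
A single application of Cauchy--Schwarz in $x$ then gives
\begin{equation*}
\abs{u_1(t,0)}^2 \le 2\,\norm{u_1(t,\cdot)}{L^2(0,\ell_1)}\,\norm{\partial_x u_1(t,\cdot)}{L^2(0,\ell_1)}.
\end{equation*}
Squaring and integrating over $[0,T]$, I would bound the first factor by its supremum over $t$, pulling $\norm{\underline u}{C([0,T],\mathbb L^2(\mathcal T))}^2$ (which dominates $\max_t\norm{u_1(t,\cdot)}{L^2(0,\ell_1)}^2$) out of the time integral, leaving $\int_0^T \norm{\partial_x u_1(t,\cdot)}{L^2(0,\ell_1)}^2\,dt$, which is controlled by $\norm{\underline u}{L^2(0,T;\mathbb H^1_e(\mathcal T))}^2$.

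This yields $\norm{u_1^2(\cdot,0)}{L^2(0,T)}^2 \le 4\,\norm{\underline u}{C([0,T],\mathbb L^2(\mathcal T))}^2\,\norm{\underline u}{L^2(0,T;\mathbb H^1_e(\mathcal T))}^2$, and taking square roots together with the elementary inequality $4ab \le (a+b)^2/\text{(const)}$ --- more precisely $2\sqrt{ab}\le a+b$ applied so that $2\,\norm{\cdot}{C}\,\norm{\cdot}{L^2H^1}\le \tfrac{1}{2}(\norm{\cdot}{C}+\norm{\cdot}{L^2H^1})^2=\tfrac12\norm{\underline u}{\mathbb B}^2$ --- produces exactly the constant $1/\sqrt2$ in \eqref{u_1}. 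The continuity of the map $\underline u\mapsto u_1^2(\cdot,0)$ then follows from the same estimate applied to the difference $u_1^2-v_1^2=(u_1-v_1)(u_1+v_1)$, together with bilinearity, giving a Lipschitz bound on bounded sets of $\mathbb B$.

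The only genuinely delicate point is justifying the trace identity and the integration by parts for a general element of $\mathbb B$, since such $\underline u$ need not be smooth and the pointwise value $u_1(t,0)$ must be interpreted via the $H^1$-trace for almost every $t$; I expect this to be the main obstacle, and I would handle it by a density argument, establishing the identity first for smooth functions and extending by continuity using that the trace map $H^1(0,\ell_1)\to\mathbb R$, $v\mapsto v(0)$, is bounded. Everything else is a routine application of Cauchy--Schwarz and Young's inequality, so no step beyond the trace justification should present difficulty.
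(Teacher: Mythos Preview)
Your approach is essentially the same as the paper's: both use the vanishing of $u_1(t,\ell_1)$ to write $u_1^2(t,0)$ as $-2\int_0^{\ell_1}u_1\partial_x u_1\,dx$, apply Cauchy--Schwarz in $x$, and then integrate in time to extract the $C([0,T];\mathbb L^2)$ and $L^2(0,T;\mathbb H^1_e)$ factors. The only cosmetic difference is that the paper works directly with the difference $u_1^2-v_1^2$ (obtaining continuity and the estimate simultaneously by setting $\underline v=0$), whereas you do the estimate first and then handle continuity via the factorization $(u_1-v_1)(u_1+v_1)$; your version in fact yields the slightly sharper constant $\tfrac12$ rather than $\tfrac{1}{\sqrt2}$.
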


\begin{proof}
Let $\underline u,\underline v\in \mathbb{B}$. As $u_1(t,\ell_1)=v_1(t,\ell_1)=0$ we have 
\begin{equation*}
\begin{split}
\abs{u_1^2(t,0)-v_1^2(t,0)}=\left|\frac{1}{2}\int_0^{\ell_1} u_1(t,x)\partial_x u_1(t,x)- v_1(t,x)\partial_x v_1(t,x)dx \right|\\
\leq\frac{1}{2}\int_0^{\ell_1} \abs{(u_1-v_1)\partial_x u_1+v_1(\partial_x u_1-\partial_x v_1)}dx\\
\leq \frac{1}{2}\left(\norm{u_1(t,.)-v_1(t,.)}{L^2(0,\ell_1)}\norm{ u_1(t,.)}{H^1(0,\ell_1)}+\norm{v_1(t,.)}{L^2(0,\ell_1)}\norm{ u_1(t,.)-v_1(t,.)}{H^1(0,\ell_1)}\right).
\end{split}
\end{equation*}
Thus,
$$
\int_0^T\abs{u_1^2(t,0)-v_1^2(t,0)}^2 dt 
$$
$$
\leq \int_0^T\frac{1}{2}\left(\norm{u_1(t,.)-v_1(t,.)}{L^2(0,\ell_1)}^2\norm{ u_1(t,.)}{H^1(0,\ell_1)}^2+\norm{v_1(t,.)}{L^2(0,\ell_1)}^2\norm{ u_1(t,.)-v_1(t,.)}{H^1(0,\ell_1)}^2\right)dt
$$
$$
\leq \frac{1}{2}\norm{u_1-v_1}{C([0,T];L^2(0,\ell_1))}^2\norm{u_1}{L^2(0,T;H^1(0,\ell_1))}^2+\frac{1}{2}\norm{v_1}{C([0,T];L^2(0,\ell_1))}^2\norm{u_1-v_1}{L^2(0,T;H^1(0,\ell_1))}^2
$$
$$
\leq \frac{1}{2}(\norm{\underline u}{\mathbb{B}}^2+\norm{\underline v}{\mathbb{B}}^2)\norm{\underline u-\underline v}{\mathbb{B}}^2.
$$
We  get the desired result and estimate \eqref{u_1}.\end{proof}

With those both previous Propositions, we can prove the well-posedness of the non-linear KdV system.

\begin{theorem}\label{well-posed KdV}
Let $(\ell_i)_{i=1..N}\in (0,+\infty)^N$  and $T>0$. Then there exist $\epsilon>0$ and $C>0$ such that for all $\underline u^0\in \mathbb{L}^2(\mathcal{T})$ with $\norm{\underline u^0}{\mathbb{L}^2(\mathcal{T})}<\epsilon$, then there exists a unique solution of (KdV) that satisfies $\norm{\underline u}{\mathbb B}\leq C\norm{\underline u^0}{\mathbb{L}^2(\mathcal{T})}$.
\end{theorem}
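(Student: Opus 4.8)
The plan is to recast $(KdV)$ as a fixed point problem for the affine linear system with source term analyzed in Proposition \ref{well_posed_source_term}. Given $\underline u\in \mathbb B$, I define $\Psi(\underline u):=\underline w$ to be the unique mild solution furnished by that proposition of \eqref{KdV-source} with the prescribed initial datum $\underline u^0$, interior source $f_j:=-u_j\partial_x u_j$, and nodal data $g:=-\frac{N}{3}u_1^2(\cdot,0)$; a fixed point of $\Psi$ is precisely a solution of $(KdV)$. First I would check that $\Psi$ is well defined on $\mathbb B$. By Proposition \ref{uu_x} applied on each edge and summed over $j$, one has $\underline f\in L^1(0,T,\mathbb L^2(\mathcal T))$ with $\norm{\underline f}{L^1(0,T,\mathbb L^2(\mathcal T))}\leq C\norm{\underline u}{\mathbb B}^2$, and by Proposition \ref{u(.,0)} one has $g\in L^2(0,T)$ with $\norm{g}{L^2(0,T)}\leq \frac{N}{3\sqrt 2}\norm{\underline u}{\mathbb B}^2$, so the triple $(\underline u^0,\underline f,g)$ belongs to the admissible class of Proposition \ref{well_posed_source_term}.

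Next I would show that $\Psi$ stabilizes a suitable ball. Inserting the two bounds above into \eqref{ineq_source} produces an estimate of the form
\begin{equation*}
\norm{\Psi(\underline u)}{\mathbb B}\leq C_1\norm{\underline u^0}{\mathbb L^2(\mathcal T)}+C_2\norm{\underline u}{\mathbb B}^2 .
\end{equation*}
Setting $R:=2C_1\norm{\underline u^0}{\mathbb L^2(\mathcal T)}$ and writing $B_R$ for the closed ball of radius $R$ in $\mathbb B$, the self-mapping condition $C_1\norm{\underline u^0}{\mathbb L^2(\mathcal T)}+C_2R^2\leq R$ reduces to $C_2R\leq \tfrac12$, which holds once $\norm{\underline u^0}{\mathbb L^2(\mathcal T)}$ is below a threshold; hence $\Psi(B_R)\subset B_R$.

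Then I would establish the contraction property. For $\underline u,\underline v\in B_R$, the difference $\Psi(\underline u)-\Psi(\underline v)$ solves \eqref{KdV-source} with zero initial datum, source $-(u_j\partial_x u_j-v_j\partial_x v_j)$, and nodal data $-\frac{N}{3}\bigl(u_1^2(\cdot,0)-v_1^2(\cdot,0)\bigr)$. Writing $u_j\partial_x u_j-v_j\partial_x v_j=u_j\partial_x(u_j-v_j)+(u_j-v_j)\partial_x v_j$ and invoking the bilinear estimate underlying Proposition \ref{uu_x} (the Sobolev embedding $H^1\hookrightarrow L^\infty$ in one dimension, followed by Cauchy--Schwarz in $t$), the source difference is bounded by $C(\norm{\underline u}{\mathbb B}+\norm{\underline v}{\mathbb B})\norm{\underline u-\underline v}{\mathbb B}$; the nodal difference is controlled by the very computation carried out in the proof of Proposition \ref{u(.,0)}, which in fact yields $\norm{u_1^2(\cdot,0)-v_1^2(\cdot,0)}{L^2(0,T)}\leq \tfrac{1}{\sqrt 2}(\norm{\underline u}{\mathbb B}^2+\norm{\underline v}{\mathbb B}^2)^{1/2}\norm{\underline u-\underline v}{\mathbb B}$. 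Feeding these into \eqref{ineq_source} gives
\begin{equation*}
\norm{\Psi(\underline u)-\Psi(\underline v)}{\mathbb B}\leq C_3R\,\norm{\underline u-\underline v}{\mathbb B},
\end{equation*}
a strict contraction as soon as $R<1/C_3$, again guaranteed by shrinking $\epsilon$.

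Finally I would fix $\epsilon>0$ so small that both $C_2R\leq\tfrac12$ and $C_3R<1$ hold whenever $\norm{\underline u^0}{\mathbb L^2(\mathcal T)}<\epsilon$, and apply the Banach fixed point theorem on $B_R$: $\Psi$ has a unique fixed point $\underline u\in B_R$, which is the desired solution, satisfying $\norm{\underline u}{\mathbb B}\leq R=2C_1\norm{\underline u^0}{\mathbb L^2(\mathcal T)}=:C\norm{\underline u^0}{\mathbb L^2(\mathcal T)}$; uniqueness among solutions obeying this bound follows by applying the same contraction estimate to any two such solutions. The \emph{main obstacle} is not the abstract iteration scheme but the bilinear control of the two nonlinearities — the interior term $u_j\partial_x u_j$ and the quadratic nodal term $u_1^2(\cdot,0)$ — measured in exactly the norms dictated by \eqref{ineq_source}. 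This is where Propositions \ref{uu_x} and \ref{u(.,0)} do the real work, and where the smallness of the data is essential: it is what renders the quadratic terms subordinate to the linear evolution and thereby closes both the self-mapping and the contraction estimates.
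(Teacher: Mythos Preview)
Your proposal is correct and follows essentially the same approach as the paper: both set up the identical fixed-point map on $\mathbb B$ via Proposition~\ref{well_posed_source_term} with source $-u_j\partial_x u_j$ and nodal data $-\tfrac{N}{3}u_1^2(\cdot,0)$, invoke Propositions~\ref{uu_x} and~\ref{u(.,0)} for the quadratic bounds, and close with Banach's fixed point theorem under a smallness assumption on $\underline u^0$. The only cosmetic difference is that you take $R=2C_1\norm{\underline u^0}{\mathbb L^2(\mathcal T)}$ (making the bound $\norm{\underline u}{\mathbb B}\leq C\norm{\underline u^0}{\mathbb L^2(\mathcal T)}$ immediate), whereas the paper fixes $R$ independently of the datum and then chooses $\epsilon$ accordingly.
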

\begin{proof} Let us fix $\underline u^0\in \mathbb{L}^2(\mathcal T)$ such that $\norm{\underline u^0}{\mathbb{L}^2(\mathcal{T})}<\epsilon$ where $\epsilon>0$ will be chosen later. We prove this theorem by using the Banach fixed point Theorem on the following map,
$F:\underline u\in \mathbb B\mapsto \underline v\in \mathbb B$, where $\underline v$ is the solution of,
\begin{equation}\label{KdV-g}
\left \{
\begin{array}{ll}
(\partial_t v_{j} + \partial_x v_{j} + \partial^3_x v_j)(t,x)=-u_j\partial_x u_j,\,& \forall \, x \in(0,\ell_j),\, t>0,\, j = 1,\ldots,N, \\
v_{j}(t,0)=v_{k}(t,0),\, &\forall \, j,k = 1,\ldots,N, \, t > 0, \\
\ds \sum_{j=1}^N \partial^2_{x} v_j (t,0) = - \alpha \, v_1(t,0)-\frac{N}{3}(u_1(t,0))^2, \,& \forall  \, t > 0, \\
v_j(t,\ell_j) = \partial_x v_j (t,\ell_j) = 0, \, &\forall \, t > 0, \, j= 1,\ldots,N, \\
v_j(0,x)=u_j^0(x), \, \forall \, x \in (0,\ell_j),\, &  j=1,\ldots,N.
\end{array}
\right.
\end{equation}
Clearly, $\underline u\in \mathbb B$ is a solution of (KdV) is equivalent to $\underline u$ is a fixed point of $F$. By using the previous regularity results,  namely Propositions \ref{well_posed_source_term}, \ref{uu_x} and \ref{u(.,0)}, we get that for all $\underline u\in \mathbb B$,
\begin{equation*}
\norm{F\underline u}{\mathbb B}\leq C \left(\norm{\underline u^0}{\mathbb{L}^2(\mathcal{T})}+\norm{\underline u}{\mathbb B}^2 \right),
\end{equation*} 
and for all $\underline u^1, \underline u^2\in \mathbb B$,

\begin{equation*}
\norm{F\underline u^1-F\underline u^2}{\mathbb B}\leq C \left(\norm{\underline u^1}{\mathbb B}+\norm{\underline u^2}{\mathbb B} \right)\norm{\underline u^1-\underline u^2}{\mathbb B}.
\end{equation*} 
Let us choose $R>0$ to be defined later and $\underline u$, $\underline u^1$ and $\underline u^2\in B_{\mathbb B}(0,R)$, then we have
\begin{equation*}\begin{split}
&\norm{F\underline u}{\mathbb B}\leq C(\epsilon+R^2)\\
&\norm{F\underline u^1-F\underline u^2}{\mathbb B}\leq C(2R)\norm{\underline u^1-\underline u^2}{\mathbb B}.
\end{split}
\end{equation*}
Thus by taking $R>0$ such that $R<\frac{1}{2C}$ and $\epsilon>0$ such that $C(\epsilon+R^2)<R$ we get the well-posedness result with the Banach fixed point Theorem.
\end{proof}

\section{Exponential stability}\label{Stability}
\subsection{Exponential stability of $(LKdV)$} 

In this section we will study two cases. First when the number of lengths which are in the  space of critical lengths, namely $\mathcal N:=\{2\pi \sqrt{\frac{k^2+l^2+kl}{3}}, \, k,l\in \mathbb N^*\}$, is strictly less than two. And in the second case when this number is larger than two.

\subsubsection{Observability inequality and stability in the non critical case}

\begin{theorem}\label{inequality_observability}
Let $(\ell_i)_{i=1..N}\in (0,+\infty)^N$ such that $\# \{\ell_i\in \mathcal N\}\leq 1$. Then for all $T >0$, there exists $C>0$ such that for all $\underline u^0\in \mathbb{L}^2(\mathcal T)$ we have,
\begin{equation}\label{observability}
\norm{\underline u^0}{L^2(\mathcal T)}^2\leq C \, \left(\norm{\partial_x\underline u(.,0)}{L^2(0,T)}^2+ \left(\alpha-\frac{N}{2} \right) \norm{\underline u(.,0)}{L^2(0,T)}^2 \right),
\end{equation}
where $\underline u\in \mathbb B$ is the solution of $(LKdV)$.
\end{theorem}

\begin{proof}

We follow the proof of Lemma 3.5 in \cite{Rosier} or Proposition 8 in \cite{Cerpa}. Let us suppose that the result is false. Then we could find a sequence $(\underline u^{0,n})_{n\in \mathbb N}\in \mathbb{L}^2(\mathcal T)$ such that $\norm{\underline u^{0,n}}{\mathbb{L}^2(\mathcal T)}=1$ and such that 
$$\norm{\partial_x\underline u^n(.,0)}{L^2(0,T)}^2+\norm{\underline u^n(.,0)}{L^2(0,T)}^2 \rightarrow 0$$
where $\underline u^n:=S(.)\underline u^{0,n}$.

By using estimates \eqref{L2H1etCL2} we have
\begin{equation*}
\norm{\underline u^n}{L^2(0,T,\mathbb H^1_e(\mathcal T))}\leq \norm{\underline u^n}{\mathbb B}\leq C(T,L,N,\alpha).
\end{equation*}
Thus $(\underline u^n)$ is bounded in $L^2(0,T,\mathbb H^1_e(\mathcal T))$ and then $(\underline u_t^n)$ is bounded in $L^2(0,T,\mathbb H^{-2}_e(\mathcal T))$. Thanks to the Aubin-Lions Lemma, we can deduce that $\underline u^n$ is relatively compact in $L^2(0,T,\mathbb L^2(\mathcal T))$ and we can assume that $\underline u^n$ converges in $L^2(0,T,\mathbb L^2(\mathcal T))$.

With inequality \eqref{u^0_bis}, we have 
\begin{equation*}
\norm{\underline u^{0,n}}{\mathbb{L}^2(\mathcal{T})}^2\leq\frac{1}{T}\norm{\underline u^n}{L^2(0,T,\mathbb{L}^2(\mathcal T))}^2+ 3\left(\alpha-\frac{N}{2} \right)\norm{ u_1^n(.,0)}{L^2(0,T)}^2+\norm{\partial_x\underline u^n(.,0)}{L^2(0,T)}^2.
\end{equation*}
As the two  last terms tends to $0$ as $n$ tends to infinity, $(\underline u^{0,n})$ is a Cauchy sequence in $\mathbb{L}^2(\mathcal{T})$ and then converges to a function $\underline u^0$ satisfying $\norm{\underline u^{0}}{\mathbb{L}^2(\mathcal T)}=1$. Then, we have $\underline u=S(.)\underline u^0$, $u_1(t,0)=0$ and $\partial_x \underline u(t,0)=\underline 0$.

With the same type of proof as in \cite{Rosier}, we have to prove the following Lemma:

\begin{lemma}
Let  $(\ell_i)_{i=1..N}\in (0,+\infty)^N$. Let us consider the following assertion:

\begin{equation}\label{assertion} \exists \, (\lambda_i)_{i=1\ldots N}\in \mathbb C^N, \, \exists \underline y\in \left[\prod_{i=1}^N H^3(0,\ell_i) \right] \setminus\{\underline 0\} \, s.t. \, \left\{\begin{array}{l}
\lambda_i y_i+y_i'+y_i'''=0,\, \forall i=1\ldots N, \\
y_i(\ell_i)=0,\, y_i'(\ell_i)=0,\, \forall i=1\ldots N, \\
y_i(0)=0,\, y_i'(0)=0,\, \forall i=1\ldots N, \\
\ds \sum_{i=1}^N y_i''(0)=0.
\end{array}\right. \end{equation}
Then $\eqref{assertion}\Leftrightarrow \# \{\ell_i\in \mathcal N\}\geq 2$.
\end{lemma}

\begin{proof} 

Let us first recall Lemma 3.5 in \cite{Rosier}:
\begin{lemma}\cite[Lemma 3.5]{Rosier}\label{lemma_rosier}
Let $L\in (0,+\infty)$. Consider the following assertion,
\begin{equation}\label{assertion_rosier}
 \exists \, \lambda\in \mathbb C, \, \exists  y\in  H^3(0,L) \setminus\{ 0\} \, s.t. \, \left\{\begin{array}{l}
\lambda y+y'+y'''=0, \\
y(0)=0,\, y'(0)=0,\,y(L)=0,\, y'(L)=0.
\end{array}\right.
\end{equation}
Then $\eqref{assertion_rosier}\Leftrightarrow L\in \mathcal N$.
\end{lemma}

\begin{enumerate}
\item If $\forall i,\, \ell_i\notin \mathcal N$ then Lemma \ref{lemma_rosier} gives us $\underline y=0$.
\item If $\# \{\ell_i\in \mathcal N\}=1$, then we can suppose  that $\ell_1\in \mathcal N$ and for all $i=2\ldots N,$ $\ell_i\notin \mathcal N$. Then Lemma  \ref{lemma_rosier} gives us that for all $i=2\ldots N,$ $\ell_i\notin \mathcal N$, $y_i=0$ and then $y_1$ has to satisfy, 
$$\left\{\begin{array}{l}
\lambda_1 y_1+y_1'+y_1'''=0, \\
y_1(\ell_1)=0,\, y_1'(\ell_1)=0,\\
y_1(0)=0,\, y_1'(0)=0, \\
 y_1''(0)=0.
\end{array}\right. $$
Due to the three null conditions at the spatial origin $0$, the unique solution of this system is $y_1=0$. 

Thus $\underline y=0$.

\item If $\# \{\ell_i\in \mathcal N\}\geq 2$, then we can suppose  that $\ell_1,\, \ell_2\in \mathcal N$. Then we can take $y_i=0$ for $i=3\ldots N$.
Lemma \ref{lemma_rosier}  gives us two  non null functions $z_1$ and $z_2$ that satisfy 
$$\left\{\begin{array}{l}
\lambda_1 z_1+z_1'+z_1'''=0, \\
z_1(\ell_1)=0,\, z_1'(\ell_1)=0,\\
z_1(0)=0,\, z_1'(0)=0,
\end{array}\right. 
\left\{\begin{array}{l}
\lambda_2 z_2+z_2'+z_2'''=0, \\
z_2(\ell_2)=0,\, z_2'(\ell_2)=0,\\
z_2(0)=0,\, z_2'(0)=0.
\end{array}\right. $$
We then define $$\underline y=(z_2''(0)z_1,-z_1''(0)z_2,0,\ldots,0).$$  As $z_1$ and $z_2$ are non null satisfy an ODE of order 3 and $z_1(0)=z_1'(0)=0$ and $z_2(0)=z_2'(0)=0$ then $z_1''(0)\neq 0$ and $z_2''(0)\neq 0$. Then $\underline y$ is non null and  satisfies the system given in \eqref{assertion}.

\end{enumerate}
\end{proof}
From this Lemma, we easily deduce the observability inequality \eqref{observability} and this ends the proof of Theorem \ref{inequality_observability}.
\end{proof}

We can now prove the result of stability.

\begin{theorem}\label{stability_non_critical}
Let $(\ell_i)_{i=1..N}\in (0,+\infty)^N$ such that $\# \{\ell_i\in \mathcal N\}\leq 1$, then there exists $C>0$ and $\mu>0$ such that for all $\underline u^0\in \mathbb{L}^2(\mathcal T)$ the solution of $(LKdV)$ satisfies,
\begin{equation*}
\norm{\underline u(t,.)}{\mathbb{L}^2(\mathcal T)}\leq C \, \norm{\underline u^0}{\mathbb{L}^2(\mathcal T)}e^{-\mu t}.
\end{equation*}
\end{theorem}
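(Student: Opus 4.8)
The plan is to derive exponential decay from the observability inequality of Theorem \ref{inequality_observability} combined with the dissipation law. Since the semigroup $S$ is a contraction semigroup on $\mathbb{L}^2(\mathcal{T})$ (Proposition \ref{3exist1}) and the energy satisfies \eqref{dissipae1}, it suffices to show that on some fixed time horizon $T>0$ the energy decays by a definite factor, i.e.\ $E(T)\leq \gamma E(0)$ for some $\gamma\in(0,1)$ independent of the initial datum; the semigroup property then propagates this over successive intervals $[kT,(k+1)T]$ to yield the exponential bound with rate $\mu=-\frac{1}{2T}\log\gamma$.

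First I would integrate the dissipation law \eqref{dissipae1} over $[0,T]$ to get the identity
\begin{equation*}
E(0)-E(T)=\left(\alpha-\frac{N}{2}\right)\int_0^T\abs{u_1(t,0)}^2\,dt+\frac{1}{2}\sum_{j=1}^N\int_0^T\abs{\partial_x u_j(t,0)}^2\,dt.
\end{equation*}
The right-hand side is exactly (up to constants) the observed boundary quantity appearing on the right of \eqref{observability}. Next I would invoke Theorem \ref{inequality_observability}, which for $\#\{\ell_i\in\mathcal N\}\leq 1$ gives a constant $C=C(T)$ with
\begin{equation*}
\norm{\underline u^0}{\mathbb{L}^2(\mathcal T)}^2\leq C\left(\norm{\partial_x\underline u(.,0)}{L^2(0,T)}^2+\left(\alpha-\frac{N}{2}\right)\norm{u_1(.,0)}{L^2(0,T)}^2\right).
\end{equation*}
Since $E(0)=\frac12\norm{\underline u^0}{\mathbb{L}^2(\mathcal T)}^2$, combining these two displays bounds $E(0)$ by a constant multiple of $E(0)-E(T)$. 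Writing this as $E(0)\leq \tilde C\,(E(0)-E(T))$ and rearranging gives $E(T)\leq\bigl(1-\tilde C^{-1}\bigr)E(0)$, so $\gamma:=1-\tilde C^{-1}<1$; note $\tilde C\geq 1$ automatically since $E(T)\geq 0$, so $\gamma\in[0,1)$.

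Finally I would run the standard semigroup iteration: applying the one-step estimate to the initial datum $\underline u(kT,\cdot)$ (legitimate because $\underline u(kT,\cdot)\in\mathbb{L}^2(\mathcal T)$ and the system is autonomous) gives $E((k+1)T)\leq\gamma E(kT)$, whence $E(kT)\leq\gamma^k E(0)$ by induction. For arbitrary $t$, write $t=kT+r$ with $r\in[0,T)$ and use monotonicity of $E$ from \eqref{dissipae1} to conclude $E(t)\leq E(kT)\leq\gamma^k E(0)\leq \gamma^{-1}e^{\mu' t}E(0)$ with $\mu'=\frac{1}{T}\log\gamma<0$, which translates back to the stated $\mathbb{L}^2$ bound $\norm{\underline u(t,.)}{\mathbb{L}^2(\mathcal T)}\leq C\norm{\underline u^0}{\mathbb{L}^2(\mathcal T)}e^{-\mu t}$ after taking square roots and absorbing constants. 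The only subtle point, and the part deserving care rather than genuine difficulty, is confirming that the observability constant $C$ may be taken uniform across the iterated intervals; this is immediate from the time-invariance of $(LKdV)$, so the real content of the theorem sits entirely in Theorem \ref{inequality_observability}, and the present argument is essentially bookkeeping.
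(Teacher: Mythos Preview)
Your proof is correct and follows essentially the same route as the paper: integrate the dissipation identity \eqref{dissipae1} over $[0,T]$, apply the observability inequality \eqref{observability} to get $\norm{\underline u(T,.)}{\mathbb{L}^2(\mathcal T)}^2\leq\frac{C-1}{C}\norm{\underline u^0}{\mathbb{L}^2(\mathcal T)}^2$, and then iterate via the semigroup property. The paper is terser (it simply writes ``Thus we get easily the stability result'' for the iteration), but your more detailed treatment of the induction and the uniformity of the observability constant is entirely in line with the intended argument.
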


\begin{proof} We follow the proof given in \cite{Perla}. With \eqref{dissipae1} we have by integration and using the previous observability inequality \eqref{observability},
\begin{equation*}
\begin{split}
\norm{\underline u(T,.)}{\mathbb{L}^2(\mathcal T)}^2&=\norm{\underline u^0}{\mathbb{L}^2(\mathcal T)}^2- \left( \left(\alpha-\frac{N}{2} \right)\norm{ u_1(.,0)}{L^2(0,T)}^2+\norm{\partial_x\underline u(.,0)}{L^2(0,T)}^2 \right)\\
&\leq \frac{C-1}{C}\norm{\underline u^0}{\mathbb{L}^2(\mathcal T)}^2
\end{split}
\end{equation*}
Thus we get easily the stability result.

\end{proof}

\subsubsection{Stability in the critical case}

We suppose in this section that $\# \{\ell_i\in \mathcal N\}\geq 2$ then  adding a damping mechanism on the critical branches except at most one gives the stability of the system.

Let us define $I_c=\{i\in\{1,\ldots,N\}, \ell_i\in \mathcal N\}$, the set of critical indexes,  and $I_c^*$ equals to $I_c$ minus one index.
We study the following problem,

\begin{equation*}
\leqno(LKdV_{damped}) 
\left \{
\begin{array}{ll}
(\partial_t u_{j} + \partial_x u_{j} + \partial^3_x u_j+a_j(x)u_j)(t,x)=0,&\forall \, x \in(0,\ell_j),\, t>0,\, j = 1,...,N, \\
u_{j}(t,0)=u_{k}(t,0),& \forall \, j,k = 1,...,N, \, t > 0, \\
 \ds \sum_{j=1}^N \partial^2_{x} u_j (t,0) = - \alpha \, u_1(t,0), &\forall \, t > 0, \\
u_j(t,\ell_j) = \partial_x u_j (t,\ell_j) = 0, &\forall \, t > 0, \, j= 1,...,N, \\
u_j(0,x)=u_j^0(x), \, \forall \, x \in (0,\ell_j),&   j=1,...,N,
\end{array}
\right.
\end{equation*}

where $\alpha > \frac{N}{2}$ and the damping $(a_j)_{j=1, N} \in \ds \prod_{j=1}^N L^\infty (0,\ell_j)$ is defined by 
\begin{equation}\label{a1}\left\{\begin{array}{l}
a_j = 0 \mbox{ for } j\in \{1,\ldots,N\}\setminus I_c^*,\\
a_j \geq c_j \mbox{ in an open nonempty set } \omega_j \mbox{ of } (0,\ell_j), \mbox{ for all }  j\in I_c^*, \\
\hbox{and} \; c_j > 0 \; \hbox{is a constant}.
\end{array}
\right.\end{equation}

We can prove the well-posedness of this system as in \cite{Perla}, by considering it as a perturbation of (LKdV). With same types of arguments we get the stability result.

\begin{theorem}\label{stability_critical}
Assume that the damping $\underline a$ is defined as in \eqref{a1}, then there exist $C>0$ and $\mu>0$ such that for all $\underline u^0\in \mathbb{L}^2(\mathcal T)$, the solution of $(LKdV_{damped})$ satisfies,
\begin{equation*}
\norm{\underline u(t,.)}{\mathbb{L}^2(\mathcal T)}\leq C\norm{\underline u^0}{\mathbb{L}^2(\mathcal T)}e^{-\mu t}.
\end{equation*}

\end{theorem}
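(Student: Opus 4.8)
The strategy is to treat $(LKdV_{damped})$ as a bounded perturbation of $(LKdV)$ and to re-run the observability-plus-dissipation argument that proved Theorem~\ref{stability_non_critical}, now exploiting the interior damping on the branches indexed by $I_c^*$ to kill the obstruction that Lemma~3.8 identified. First I would record the modified dissipation law. Multiplying the PDE in $(LKdV_{damped})$ by $\overline{u}_j$, integrating over each $(0,\ell_j)$, summing, and integrating by parts exactly as in the proof of Proposition~\ref{3exist1}, the extra term $\sum_j \int_0^{\ell_j} a_j(x)\abs{u_j}^2\,dx$ appears, yielding
\begin{equation*}
E'(t) = -\left(\alpha-\frac{N}{2}\right)\abs{u_1(t,0)}^2 - \frac12\sum_{j=1}^N\abs{\partial_x u_j(t,0)}^2 - \sum_{j\in I_c^*}\int_0^{\ell_j} a_j(x)\abs{u_j(t,x)}^2\,dx \leq 0,
\end{equation*}
so the energy is still nonincreasing and the semigroup of contractions exists by the same argument (the multiplication operator $\underline u\mapsto (a_j u_j)_j$ is bounded on $\mathbb{L}^2(\mathcal T)$, so Proposition~\ref{3exist1} together with bounded perturbation theory gives well-posedness).

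Next I would establish the corresponding observability inequality: there is $C>0$ such that for every $\underline u^0\in\mathbb{L}^2(\mathcal T)$ the solution satisfies
\begin{equation*}
\norm{\underline u^0}{\mathbb{L}^2(\mathcal T)}^2 \leq C\left(\norm{\partial_x\underline u(.,0)}{L^2(0,T)}^2 + \left(\alpha-\frac{N}{2}\right)\norm{u_1(.,0)}{L^2(0,T)}^2 + \sum_{j\in I_c^*}\norm{a_j^{1/2}u_j}{L^2(0,T;L^2(0,\ell_j))}^2\right).
\end{equation*}
The proof is by contradiction and compactness, mirroring Theorem~\ref{inequality_observability}: assume a normalized sequence $\underline u^{0,n}$ for which the right-hand side tends to zero, use the uniform $\mathbb{B}$-bound from \eqref{L2H1etCL2} (adapted to the damped system) and Aubin--Lions to extract a strongly convergent subsequence, and pass to a limit $\underline u = S_a(.)\underline u^0$ with $\norm{\underline u^0}{\mathbb{L}^2(\mathcal T)}=1$ satisfying $u_1(t,0)=0$, $\partial_x\underline u(t,0)=\underline 0$, and $a_j u_j\equiv 0$ on $(0,\ell_j)$ for all $j\in I_c^*$. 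Reducing to a spectral (eigenvalue) problem as in Lemma~3.8, the limit solves the system \eqref{assertion} with the additional condition $a_j y_j = 0$, hence $y_j$ vanishes on each $\omega_j$ for $j\in I_c^*$. Since each $y_j$ solves the third-order ODE $\lambda_j y_j + y_j' + y_j''' = 0$, vanishing on an open set $\omega_j$ forces $y_j\equiv 0$ on all of $(0,\ell_j)$ by unique continuation for this constant-coefficient ODE. Thus at most one critical branch (the one index removed to form $I_c^*$) can carry a nonzero component, and by the case~$\#\{\ell_i\in\mathcal N\}=1$ argument already given in Lemma~3.8 this single branch is also forced to vanish; therefore $\underline y=\underline 0$, contradicting $\norm{\underline u^0}{\mathbb{L}^2(\mathcal T)}=1$.

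Finally, combining the modified dissipation law integrated over $[0,T]$ with the observability inequality gives, exactly as in Theorem~\ref{stability_non_critical}, a constant $\gamma\in(0,1)$ with $\norm{\underline u(T,.)}{\mathbb{L}^2(\mathcal T)}^2 \leq \gamma\,\norm{\underline u^0}{\mathbb{L}^2(\mathcal T)}^2$; the semigroup property and iteration over successive intervals of length $T$ then yield the exponential decay $\norm{\underline u(t,.)}{\mathbb{L}^2(\mathcal T)} \leq C\norm{\underline u^0}{\mathbb{L}^2(\mathcal T)}e^{-\mu t}$. The main obstacle is the compactness/spectral step: one must verify that the limiting reduction to the eigenvalue problem \eqref{assertion} goes through verbatim for the damped operator and, above all, that the unique-continuation property for the third-order ODE genuinely upgrades "vanishing on the open set $\omega_j$" to "vanishing identically," which is what neutralizes all but one critical length and makes the damping choice \eqref{a1} exactly sufficient.
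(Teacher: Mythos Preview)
Your overall plan---modified dissipation identity, observability by contradiction and compactness, then iteration as in Theorem~\ref{stability_non_critical}---matches the paper's proof exactly. The only notable difference is how the damped branches $j\in I_c^*$ are eliminated in the contradiction step. You propose to carry the interior-vanishing constraint through the spectral reduction and then use analyticity of solutions to the constant-coefficient ODE $\lambda_j y_j + y_j' + y_j''' = 0$; this works, but, as you correctly flag, requires checking that the eigenfunction produced by the reduction inherits the condition $y_j|_{\omega_j}=0$. The paper bypasses this entirely: since $a_j u_j=0$ forces $u_j=0$ on $(0,T)\times\omega_j$ and the equation there reduces to the undamped linear KdV, Holmgren's theorem applied directly to the PDE gives $u_j\equiv 0$ on the whole branch without ever passing to the ODE. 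For the single remaining critical index the paper likewise argues at the PDE level (three null Cauchy data at $x=0$ force $u_j\equiv 0$), paralleling your ODE version. Both routes are valid; the paper's Holmgren shortcut is the cleaner way around precisely the obstacle you identified.
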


\begin{proof} We first multiply the PDE of $(LKdV_{damped})$ by $x\overline{u_j}$ and we easily get the following estimate,

\begin{equation*}
\norm{\partial_x \underline u}{L^2([0,T],\mathbb{L}^2(\mathcal T))}^2\leq \frac{1}{3} \, \left(L+T+N+\frac{1}{2\alpha-N} \right) \, \norm{\underline u^0}{\mathbb{L}^2(\mathcal T)}^2.
\end{equation*}

Then we multiply the PDE of $(LKdV_{damped})$ by $(T-t) \overline{u_j}$ to get,
\begin{equation}\label{u^0}
\begin{split}
\norm{\underline u^0}{\mathbb{L}^2(\mathcal{T})}^2&\leq\frac{1}{T}\norm{\underline u}{L^2(0,T,\mathbb{L}^2(\mathcal T))}^2+(2\alpha-N)\norm{\underline u(.,0)}{L^2(0,T)}^2\\
&+\norm{\partial_x\underline u(.,0)}{L^2(0,T)}^2+2\sum_{j\in I_c^*}\int_0^T\int_0^{l_j}a_j(x)\abs{u_j}^2dxdt.
\end{split}\end{equation}

We argue by contradiction to prove the following inequality,
\begin{equation*}
\norm{\underline u^0}{\mathbb{L}^2(\mathcal{T})}^2\leq C\left((2\alpha-N)\norm{\underline u(.,0)}{L^2(0,T)}^2\\
+\norm{\partial_x\underline u(.,0)}{L^2(0,T)}^2+2\sum_{j\in I_c^*}\int_0^T\int_0^{l_j}a_j(x)\abs{u_j}^2dxdt\right).
\end{equation*}

By following the same arguments as for the proof of Theorem \ref{inequality_observability} we can construct a sequence $(\underline u^{0,n})\in \mathbb{L}^2(\mathcal T)$ such that the corresponding solution of $(LKdV_{damped})$ satisfies
$$\left\{\begin{array}{l}
 \norm{\underline u^n(.,0)}{L^2(0,T)}\rightarrow 0,\\
 \norm{\partial_x\underline u^n(.,0)}{L^2(0,T)}\rightarrow 0,\\ 
\ds \sum_{j\in I_c^*}\int_0^T\int_0^{l_j}a_j(x)\abs{u^n_j}^2dxdt\rightarrow 0.
 \end{array}\right.$$
  By passing to the limit we obtain a non trivial solution $\underline u\in \mathbb B$ of $(LKdV_{damped})$ such that 
  $$\left\{\begin{array}{l}
  \underline u(.,0)=\underline 0,\\
  \partial_x\underline u(.,0)=\underline 0,\\
\int_0^T\int_0^{l_j}a_j(x)\abs{u_j}^2dxdt=0,\,  \forall j\in I_c^*.
   \end{array}\right.$$

\begin{enumerate}
\item For all $j\in \{1,\ldots,N\}\setminus I_c$, $u_j$ is solution of $(LKdV)$ and such that $u_j(.,0)=\partial_x u_j(.,0)=0$. Then thanks to Lemma \ref{lemma_rosier}, $u_j=0$. 
\item For all $j\in I_c^*$, $\int_0^T\int_0^{l_j}a_j(x)\abs{u_j}^2dxdt=0$, thus $a_ju_j=0$ and $u_j=0$ in $(0,T)\times \omega_j$. Then $\partial_t u_j+\partial_x u_j+\partial^3_x u_j=0$ and thanks to Holmgren's Theorem, $u_j=0$.
\item For $j\in I_c\setminus I_c^*$, $u_j$ satisfies,
\begin{equation*}
\left\{\begin{array}{l}
\partial_t u_j+\partial_x u_j+\partial^3_x u_j=0,\\
u_j(t,0)=0,\, \partial_x u_j(t,0)=0, \partial^2_x u_j(t,0)=0,\\
u_j(t,\ell_j)=\partial_x u_j(t,\ell_j)=0.
\end{array}\right.
\end{equation*} 
Due to the three null conditions at the central node, we obtain that $u_j=0$.
\end{enumerate}
Thus $\underline u=\underline 0$ and we get a contradiction which ends the proof of Theorem \ref{stability_critical}. 

\end{proof}

\subsection{Stabilization of the $(KdV)$ system on a star-shaped network in the critical or non critical case} \label{nonlinear}

\subsubsection{Stability for small amplitude solutions}
In this section we study the stabilization of the non linear $(KdV)$ system for the critical and the non critical case.

We define as before  $I_c=\{i\in\{1,\ldots,N\}, \ell_i\in \mathcal N\}$, the set of critical indexes,  and $I_c^*$ equals to $I_c$ minus one index. Eventually, $I_c^*=\emptyset$.
We study the following problem,

\begin{equation*}
\leqno(KdV_{damped}) 
\left \{
\begin{array}{ll}
(\partial_t u_{j} + \partial_x u_{j} + \partial^3_x u_j+a_ju_j+u_j\partial_x u_j)(t,x)=0,& \forall \, x \in(0,\ell_j),\, t>0,\, j = 1,...,N, \\
u_{j}(t,0)=u_{k}(t,0),& \forall \, j,k = 1,...,N, \, t > 0, \\
 \ds \sum_{j=1}^N \partial^2_{x} u_j (t,0) = - \alpha \, u_1(t,0)-\frac{N}{3}(u_1(t,0))^2,&  \forall \, t > 0, \\
u_j(t,\ell_j) = \partial_x u_j (t,\ell_j) = 0,&  \forall \, t > 0, \, j= 1,...,N, \\
u_j(0,x)=u_j^0(x), \, \forall \, x \in (0,\ell_j),&   j=1,...,N,
\end{array}
\right.
\end{equation*}

where $\alpha > \frac{N}{2}$ and the damping $(a_j)_{j=1, N} \in \ds \prod_{j=1}^N L^\infty (0,\ell_j)$ is defined by 
\begin{equation}\label{a}\left\{\begin{array}{l}
a_j = 0 \mbox{ for } j\in \{1,\ldots,N\}\setminus I_c^*,\\
a_j \geq c_j \mbox{ in an open nonempty set } \omega_j \mbox{ of } (0,\ell_j), \mbox{ for }  j\in I_c^*, \\
\hbox{and} \; c_i > 0 \; \hbox{is a constant}.
\end{array}
\right.\end{equation}

Let $\underline u^0\in \mathbb L^2(\mathcal T)$  such that $\norm{u^0}{\mathbb L^2(\mathcal T)}$ is sufficiently small in order to have with Theorem \ref{well-posed KdV}  the existence and unicity  of $\underline u\in \mathbb B$  solution of $(KdV_{damped})$ which is a perturbation of $(KdV)$. Then we can decompose $\underline u$ into $\underline u^1+\underline u^2$ respective solutions of 
\begin{equation*}
\left \{
\begin{array}{ll}
(\partial_t u^1_{j} + \partial_x u^1_{j} + \partial^3_x u^1_j+a_ju^1_j)(t,x)=0,& \forall \, x \in(0,\ell_j),\, t>0,\, j = 1,\ldots,N, \\
u^1_{j}(t,0)=u^1_{k}(t,0),& \forall \, j,k = 1,\ldots,N, \, t > 0, \\
\ds \sum_{j=1}^N \partial^2_{x} u^1_j (t,0) = - \alpha \, u^1_1(t,0), & \forall  \, t > 0, \\
u^1_j(t,\ell_j) = \partial_x u^1_j (t,\ell_j) = 0, & \forall \, t > 0, \, j= 1,\ldots,N, \\
u^1_j(0,x)=u_j^0(x), \,& \forall \, x \in (0,\ell_j),  j=1,\ldots,N,
\end{array}\right.
\end{equation*}
\begin{equation*}
\left \{
\begin{array}{ll}
(\partial_t u^2_{j} + \partial_x u^2_{j} + \partial^3_x u^2_j+a_ju^2_j)(t,x)=-u_j\partial_x u_j,& \forall \, x \in(0,\ell_j),\, t>0,\, j = 1,\ldots,N, \\
u^2_{j}(t,0)=u^2_{k}(t,0),& \forall \, j,k = 1,\ldots,N, \, t > 0, \\
\ds \sum_{j=1}^N \partial^2_{x} u^2_j (t,0) = - \alpha \, u^2_1(t,0)-\frac{N}{3}(u_1(t,0))^2,&  \forall  \, t > 0, \\
u^2_j(t,\ell_j) = \partial_x u^2_j (t,\ell_j) = 0,& \forall \, t > 0, \, j= 1,\ldots,N, \\
u^2_j(0,x)=0, \,& \forall \, x \in (0,\ell_j),  j=1,\ldots,N.
\end{array}\right.
\end{equation*}

Then thanks to Theorems \ref{stability_non_critical} and \ref{stability_critical} we have the existence of $\gamma<1$ such that for all $t\in [0,T]$,
\begin{equation*}
\norm{\underline u^1(t,.)}{\mathbb L^2(\mathcal T)}\leq \gamma \norm{\underline u^0}{\mathbb L^2(\mathcal T)}.
\end{equation*} 

Thanks to Propositions   \ref{well_posed_source_term}, \ref{uu_x} and \ref{u(.,0)}  we can deduce that 
\begin{equation*}\begin{split}
\norm{\underline u^2(t,.)}{\mathbb L^2(\mathcal T)}&\leq C(\norm{\underline u\partial_x\underline u}{L^1(0,T,\mathbb L^2(\mathcal T))}+\norm{(u_1(t,0))^2}{L^2(0,T)}\\
&\leq C\norm{\underline u}{\mathbb B}^2.
\end{split}\end{equation*}

We need some estimates on this last right term.

We first multiply the equation of $(KdV_{damped})$ by $\bar u_j$ and integrate in space and time over $(0,s)$ to obtain  
\begin{equation*}
\norm{\underline u (s,.)}{\mathbb L^2(\mathcal T)}^2  + \int_0^s \sum_{j=1}^N \abs{\partial_x u_j(t,0)}^2 \, dt + (2\alpha-N) \int_0^s\abs{u(t,0)}^2dt+ 2\sum_{j\in I_c^*}\int_0^T\int_0^{l_j}a_j(x)\abs{u_j}^2dxdt= \norm{\underline u^0}{\mathbb L^2(\mathcal T)}^2.
\end{equation*}
Secondly, we multiply $(KdV_{damped})$ by $x\bar u$ and integrate in space and time and obtain with the previous result,
\begin{equation}\label{u_H^1}
\norm{\partial_x\underline u}{L^2(0,T,\mathbb L^2(\mathcal T))}^2\leq C(T,L,N,\alpha)\norm{\underline u^0}{\mathbb L^2(\mathcal T)}^2+\frac{2}{9}\int_0^T\int_\mathcal{T}(\underline u)^3dxdt.
\end{equation} 

As for all $i=1,\ldots,N$, $u_i\in L^2(0,T,H^1(0,\ell_i))$ and $H^1(0,\ell_i)$ embeds into $C([0,\ell_i])$, we have as in \cite{Cerpa} or \cite{Perla}, 
\begin{equation*}
\begin{split}
\ds \sum_{i=1}^N \int_0^T\int_0^{\ell_i}\abs{u_i}^3dxdt\leq CT^{1/2}\norm{\underline u_0}{\mathbb L^2(\mathcal T)}^2\norm{\underline u}{L^2(0,T;\mathbb H^1(\mathcal T))}.
\end{split}
\end{equation*}
We obtain with \eqref{u_H^1},
\begin{equation}\label{3.49}
\norm{\underline u}{L^2(0,T:\mathbb H^1(\mathcal T))}^2\leq C(T,L,N,\alpha) \left(\norm{\underline u_0}{\mathbb L^2(\mathcal T)}^2+\norm{\underline u_0}{\mathbb L^2(\mathcal T)}^4 \right).
\end{equation}

This gives with the previous inequalities, the estimate, 
\begin{equation*}
\norm{\underline u(s,.)}{\mathbb L^2(\mathcal T)}\leq \norm{\underline u_0}{L^2(0,T)}(\gamma+C\norm{\underline u_0}{\mathbb L^2(\mathcal T)}+C\norm{\underline u_0}{\mathbb L^2(\mathcal T)}^3).
\end{equation*}
Thus by taking $\epsilon >0$ small enough such that $\gamma+C\epsilon+C^3\epsilon<1$ if $\underline u_0$ satisfies $\norm{\underline u_0}{\mathbb L^2(\mathcal T))}<\epsilon$ we have 
\begin{equation*}
\norm{\underline u(s,.)}{\mathbb L^2(\mathcal T)}\leq (\gamma+C\epsilon+C^3\epsilon)\norm{\underline u_0}{\mathbb L^2(\mathcal T)}.
\end{equation*}
and we get the stability result.

\subsubsection{Semi-global stability result}
In this section we prove a semi-global result, provided that the damping is applied on all branches.

Let $\underline a\in \mathbb L^\infty(\mathcal T)$ with,
\begin{equation}\label{damping}
\left\{\begin{array}{l}
a_i(x)\geq a_0>0,\, \forall x\in \omega_i,\, \forall i=1,\ldots,N,\\
\mbox{ with } \omega_i \mbox{ a nonempty open subset of } (0,\ell_i). 
\end{array}\right.
\end{equation}
Then our main result of this section is:
\begin{theorem}
Let $(\ell_i)_{i=1,\ldots,N}\in (0,+\infty)^N$, let $\underline a\in \mathbb L^\infty(\mathcal T)$ satisfying \eqref{damping}, and let $R>0$. Then for all $\underline u^0\in \mathbb{L}^2(\mathcal T)$ with $\norm{\underline u^0}{\mathbb{L}^2(\mathcal T)}\leq R$ then there exist $C=C(R)>0$ and $\mu=\mu(R)>0$ such that  the solution $\underline u$ of $(KdV_{damped})$ satisfies,
$$\norm{\underline u(t,.)}{\mathbb{L}^2(\mathcal T)}\leq Ce^{-\mu t}\norm{\underline u^0}{\mathbb{L}^2(\mathcal T)},\, \forall t\geq 0.$$
\end{theorem}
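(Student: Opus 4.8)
The plan is to combine a dissipation identity with a nonlinear observability inequality that is uniform over balls of radius $R$, and then to iterate on successive time intervals. The semi-global nature (arbitrary $R$, with constants depending on $R$) forces a compactness--uniqueness argument rather than the fixed-point/perturbative scheme used for small data.

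First I would establish the dissipation law. Multiplying the $j$-th equation of $(KdV_{damped})$ by $u_j$, integrating over $(0,\ell_j)$, summing in $j$ and integrating by parts, the boundary term produced by the transport nonlinearity $u_j\partial_x u_j$ is $-\tfrac13 u_j^3(0)$, whose sum $-\tfrac{N}{3}u_1^3(0)$ cancels exactly against the cubic contribution of the quadratic node condition $\sum_j \partial_x^2 u_j(t,0) = -\alpha u_1(t,0) - \tfrac{N}{3}u_1^2(t,0)$. What remains is
\begin{equation*}
E'(t) = -\left(\alpha - \frac{N}{2}\right)\abs{u_1(t,0)}^2 - \frac{1}{2}\sum_{j=1}^N \abs{\partial_x u_j(t,0)}^2 - \sum_{j=1}^N \int_0^{\ell_j} a_j(x)\abs{u_j(t,x)}^2\,dx \leq 0.
\end{equation*}
Writing $D(t)$ for the (nonnegative) right-hand side with the sign reversed, integration gives $E(0)-E(T)=\int_0^T D(t)\,dt$.

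The core of the argument is the observability inequality: for every $R,T>0$ there is $C(R)>0$ such that every solution with $\norm{\underline u^0}{\mathbb L^2(\mathcal T)}\leq R$ obeys $E(0)\leq C(R)\int_0^T D(t)\,dt$. Granting it, the dissipation identity yields $E(T)\leq\kappa E(0)$ with $\kappa=1-1/C(R)<1$; since $E$ is nonincreasing, $E(mT)\leq R^2$ for all $m$, so the \emph{same} $\kappa$ applies on each $[mT,(m+1)T]$, giving $E(mT)\leq\kappa^m E(0)$ and, by monotonicity, the claimed decay with $\mu=-(\ln\kappa)/T>0$ and $C=\kappa^{-1}$. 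I would prove the observability by contradiction: suppose there are solutions $\underline u^n$ with $E_n(0)\leq R^2$ and $\int_0^T D_n(t)\,dt/E_n(0)\to 0$, and set $\alpha_n:=\norm{\underline u^{0,n}}{\mathbb L^2(\mathcal T)}$. If $\alpha_n\not\to 0$ (along a subsequence), then by \eqref{3.49} the $\underline u^n$ are bounded in $\mathbb B$, hence $\partial_t\underline u^n$ is bounded in $L^2(0,T;\mathbb H^{-2}_e(\mathcal T))$, and Aubin--Lions (exactly as in Theorem \ref{inequality_observability}) gives strong convergence in $L^2(0,T;\mathbb L^2(\mathcal T))$ and weak convergence in $L^2(0,T;\mathbb H^1_e(\mathcal T))$, enough to pass to the limit in $u^n_j\partial_x u^n_j$. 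The limit $\underline u$ solves $(KdV_{damped})$ with $D\equiv 0$, so $u_1(\cdot,0)\equiv 0$, $\partial_x u_j(\cdot,0)\equiv 0$, and $a_j u_j\equiv 0$; since $a_j\geq a_0>0$ on $\omega_j$ for \emph{every} $j$ by \eqref{damping}, each $u_j$ vanishes on $(0,T)\times\omega_j$, and Holmgren's theorem (the transport term being a lower-order coefficient) propagates this to $u_j\equiv 0$ on the whole branch. Then the analogue of \eqref{u^0} bounds $E_n(0)$ by $\tfrac1T\norm{\underline u^n}{L^2(0,T;\mathbb L^2(\mathcal T))}^2$ plus dissipative terms, which all tend to $0$, contradicting $\alpha_n\not\to 0$.

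If instead $\alpha_n\to 0$, I would rescale $\underline v^n:=\underline u^n/\alpha_n$, so $E(\underline v^n)(0)=1$; the transport term and the quadratic node term then carry a factor $\alpha_n$, and in the limit $\underline v$ solves the linear $(LKdV_{damped})$ with zero dissipation. The same branchwise Holmgren argument forces $\underline v\equiv 0$, while the $\underline u^0$-estimate \eqref{u^0} forces $E(\underline v^n)(0)\to 0$, contradicting $E(\underline v^n)(0)=1$. Both cases being impossible, the observability inequality holds. The main obstacle is precisely this unique-continuation/limit step: one must pass to the limit in the nonlinearity uniformly in $R$ and guarantee that the limiting datum is nontrivial (via the $\underline u^0$-type estimate). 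The decisive simplification compared with the local result is that \eqref{damping} places damping on all branches, so every $u_j$ vanishes on an open set and Holmgren kills each branch directly — this is what upgrades the small-data stabilization to a result for arbitrary bounded data.
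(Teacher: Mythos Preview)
Your overall architecture---dissipation identity, observability by contradiction via compactness--uniqueness, then iteration on time intervals---matches the paper's. The normalization differs (you scale by $\alpha_n=\|\underline u^{0,n}\|_{\mathbb L^2(\mathcal T)}$ and split on whether $\alpha_n\to 0$; the paper scales by $\lambda^n=\|\underline u^n\|_{L^2(0,T;\mathbb L^2(\mathcal T))}$ and splits on whether the limit $\lambda$ of $\lambda^n$ vanishes), but this is cosmetic.

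There is, however, a genuine gap in your unique-continuation step for the nonlinear limit. In your case $\alpha_n\not\to 0$ the limit $\underline u$ satisfies the \emph{nonlinear} equation $\partial_t u_j+\partial_x u_j+\partial_x^3 u_j+u_j\partial_x u_j=0$ and vanishes on $(0,T)\times\omega_j$. You invoke Holmgren, treating the transport term as ``a lower-order coefficient''. But Holmgren's theorem requires \emph{analytic} coefficients; viewed as a linear equation, the coefficient in front of $\partial_x u_j$ is $1+u_j$, and $u_j$ is only known to lie in $\mathbb B$. The correct tool here is the unique continuation property of Saut--Scheurer (Theorem~\ref{saut} in the paper), which handles precisely $y_t+y_x+y_{xxx}+yy_x=0$ under the regularity hypothesis $y\in L^2(0,T;H^3(0,L))$. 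This in turn forces an additional step you have omitted: one must upgrade the limit from $L^2(0,T;\mathbb H^1_e(\mathcal T))$ to $L^2(0,T;H^3)$ on each branch before Saut--Scheurer can be applied; the paper does this by following Pazoto. Only in your second case ($\alpha_n\to 0$, limiting equation linear) is Holmgren legitimate---and there the paper uses it too.
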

\begin{proof}
To prove this result we follow the article of Pazoto \cite{Pazoto}.  Our result is based on this Unique Continuation Property of Saut and Sheurer \cite{Saut}.
\begin{theorem}\label{saut}(\cite[Theorem 4.2]{Saut})
Let $L>0$ and let $y\in L^2(0,T,H^3(0,L))$ be a solution of
\begin{equation*}
y_t+y_x+y_{xxx}+yy_x=0,
\end{equation*}
such that $y(t,x)=0$, $\forall t\in(t_1,t_2)$ and $x\in \omega$ where $\omega$ is a nonempty open subset of $(0,L)$. Then $y(t,x)=0$, $\forall t\in(t_1,t_2)$ and $x\in(0,L)$.
\end{theorem}

By multiplying $(KdV_{damped})$ by $\bar u_j$ and integrating on time and space, we have,
\begin{equation}\label{3_nonlin}
\norm{\underline u (s,.)}{\mathbb L^2(\mathcal T)}^2  + \int_0^s \sum_{j=1}^N \abs{\partial_x u_j(t,0)}^2 \, dt \, +   
\end{equation}
$$
(2\alpha-N) \int_0^s\abs{u_1(t,0)}^2dt+ 2\sum_{j=1}^N\int_0^T\int_0^{\ell_j}a_j(x)\abs{u_j}^2dxdt = \norm{\underline u^0}{\mathbb L^2(\mathcal T)}^2.
$$

By integrating \eqref{3_nonlin} over $(0,T)$ we have,
\begin{multline*}
T\norm{\underline u^0}{\mathbb L^2(\mathcal T)}^2\leq\int_0^T\norm{\underline u (s,.)}{\mathbb L^2(\mathcal T)}^2 ds 
+ T\int_0^T \sum_{j=1}^N \abs{\partial_x u_j(t,0)}^2 \, dt \\+ (2\alpha-N) T\int_0^T\abs{u_1(t,0)}^2+ 2T\sum_{j=1}^N\int_0^T\int_0^{\ell_j}a_j(x)\abs{u_j}^2dxdt.
\end{multline*} 
Thus we just have to prove that there exists $C=C(T,R)$ such that 
\begin{equation}
\int_0^T\norm{\underline u (t,.)}{\mathbb L^2(\mathcal T)}^2 dt \leq 
\end{equation}
$$
C\left(\int_0^T \sum_{j=1}^N \abs{\partial_x u_j(t,0)}^2 \, dt + (2\alpha-N) \int_0^T\abs{u_1(t,0)}^2dt+ 2\sum_{j=1}^N\int_0^T\int_0^{\ell_j}a_j(x)\abs{u_j}^2dxdt\right).
$$
\end{proof}

We assume that this inequality is false. Then we can find a sequence $(\underline u^n)\in \mathbb B$ solution of $(KdV_{damped})$ with $\norm{\underline u^{0,n}}{\mathbb L^2(\mathcal T)}\leq R$ and such that 
\begin{equation*}
\lim_{n\rightarrow \infty}\frac{\norm{\underline u^n}{L^2(0,T,\mathbb L^2(\mathcal T))}^2}{\norm{\partial_x\underline u^n(.,0)}{L^2(0,T)}^2+(2\alpha-N)\norm{ u^n_1(.,0)}{L^2(0,T)}^2+2\sum_{j=1}^N\int_0^T\int_0^{\ell_j}a_j(x)\abs{u_j^n}^2dxdt}=\infty.
\end{equation*}

Let us define $\lambda^n:=\norm{\underline u^n}{L^2(0,T,\mathbb L^2(\mathcal T))}$ and $\underline v^n:=\frac{\underline u^n}{\lambda^n}$. Then $\underline v^n$ satisfies the following problem,
\begin{equation}\label{eq_v}
\left\{\begin{array}{ll}
\partial_t v^n_i+\partial_x v^n_i+\partial_{xxx} v^n_i+a_i v^n_i+\lambda^n v_i^n\partial_x v_i^n=0,& i=1,\ldots,N\\
v^n_i(t,\ell_i)=0,\, \partial_x v^n_i(t,\ell_i)=0, & i=1,\ldots,N\\
v^n_i(t,0)=v^n_j(t,0),& i,j=1,\ldots,N,\\
\sum_{i=1}^N \partial_{xx}v^n_i(t,0)=-\alpha v_1^n(t,0)-\frac{N}{3}(v_1^n(t,0))^2, & i=1,\ldots,N\\
\norm{v^n}{L^2(0,T,\mathbb L^2(\mathcal T))}=1.
\end{array}
\right.
\end{equation}
By multiplying the PDE in \eqref{eq_v} by $\bar v_i$ and integrating on $(0,T)\times(0,\ell_i)$ we get
\begin{multline*}
T\norm{\underline v^n(0,.)}{\mathbb L^2(\mathcal T)}^2\leq \int_0^T\norm{\underline v^n (s,.)}{\mathbb L^2(\mathcal T)}^2 ds 
+ T\int_0^T \sum_{j=1}^N \abs{\partial_x v^n_j(t,0)}^2 \, dt \\+ (2\alpha-N) T\int_0^T\abs{v^n(t,0)}^2+ 2T\sum_{j=1}^N\int_0^T\int_0^{\ell_j}a_j(x)\abs{v^n_j}^2dxdt.
\end{multline*} 
Thus $(v^n(0,.))$ is bounded in $\mathbb L^2(\mathcal T)$.

By using \eqref{3_nonlin}, we see that $\lambda^n:=\norm{\underline u^n}{L^2(0,T,\mathbb L(\mathcal T))}\leq \sqrt T\norm{\underline u^{0,n}}{\mathbb L^2(\mathcal T)}\leq \sqrt TR$. Thus $(\lambda^n)$ is bounded in $\mathbb R$.

Then we can get as for the previous inequality \eqref{3.49},

\begin{equation}\label{3.58}
\norm{\underline v^n}{L^2(0,T:\mathbb H^1(\mathcal T))}^2\leq C(T,L,N,\alpha,R) \left(\norm{\underline v^{0,n}}{\mathbb L^2(\mathcal T)}^2+\norm{\underline v^{0,n}}{\mathbb L^2(\mathcal T)}^4 \right).
\end{equation}
Thus $(\underline v^n)$ is bounded in $L^2(0,T:\mathbb H^1(\mathcal T))$, and we can prove that for all $i=1,\ldots,N$, $(v^n_i\partial_x v^n_i)$ is a sequence of $L^2(0,T,L^1(0,\ell_i))$ as 
\begin{equation*}
\norm{v^n_i\partial_x v^n_i}{L^2(0,T,L^1(0,\ell_i))}\leq \norm{\underline v^n}{C([0,T],\mathbb L^2(\mathcal T))}\norm{\underline v^n}{L^2(0,T,\mathbb H^1(\mathcal T))}.
\end{equation*}

Thus we can deduce that $(\partial_t \underline v^n)$ is bounded in $L^2(0,T,\mathbb H^1(\mathcal T))$ and then we can extract from $(\underline v^n)$ a subsequence that converges strongly in $L^2(0,T,\mathbb L^2(\mathcal T))$ to a limit $\underline v$ with $\norm{\underline v}{L^2(0,T,\mathbb L^2(\mathcal T))}=1$ and we have $v_i(t,x)=0$, $ \forall x\in \omega_i$,  $v_i(t,0)=0$ and $\partial_x v_i(t,0)=0$, $\forall t\in (0,T),\, \forall i=1,\ldots,N$. 

As $(\lambda^n)$ is bounded in $\mathbb R$ we can extract a sequence that converges in $\mathbb R$ to a limit $\lambda\geq 0$. Thus $\underline v$ satisfies the following system,
\begin{equation*}
\left\{\begin{array}{ll}
\partial_t v_i+\partial_x v_i+\partial_{xxx} v_i+\lambda v_i\partial_x v_i=0,&\forall i=1,\ldots, N,\\
v_i(t,\ell_i)=0,\, \partial_x v_i(t,\ell_i)=0,\\
v_i(t,0)=0,\, \partial_x v_i(t,0)=0,\\
\norm{\underline v(0,.)}{\mathbb L^2(\mathcal T)}\leq R,\\
\norm{\underline v}{L^2(0,T,\mathbb L^2(\mathcal T))}=1.
\end{array}
\right.
\end{equation*}

\begin{enumerate}
\item If $\lambda=0$ then thanks to Holmgren's Theorem, we deduce that $\underline v=0$ which is absurd.
\item If $\lambda>0$ then we will apply the results of Saut and Sheurer \cite{Saut} to get a contradiction. As $v_i$ satisfies the same  equation as in \cite{Pazoto} we can deduce that $v_i\in L^2(0,T,H^3(0,\ell_i))$ for all $i=1,\ldots, N$. Thus by applying Theorem \ref{saut} we get the contradiction and then the stability result.
\end{enumerate}

\section{Controllability results.} \label{Control}

We first consider the following exact boundary controllability problem for the linearized KdV equation:

 For any $T>0$, $\alpha>\frac{N}{2}$ and $(\ell_i)_{i=1,\ldots,N}\in (0,+\infty)^N$, for every $\underline u^0,\underline u^T\in \mathbb L^2(\mathcal T)$, does there exist (N+1) controls $\underline g\in \mathbb{L}^2(0,T)$ and $g\in L^2(0,T)$ such that the solution $\underline u\in \mathbb B$ of the following system, $(LKdV_{control})$, satisfies $\underline u(0,.)=\underline u^0$ and $\underline u(T,.)=\underline u^T$ ?

\begin{equation*}
\leqno(LKdV_{control}) 
\left \{
\begin{array}{ll}
(\partial_t u_{j} + \partial_x u_{j} + \partial^3_x u_j)(t,x)=0,&\, \forall \, x \in(0,\ell_j),\, t>0,\, j = 1,...,N, \\
u_{j}(t,0)=u_{k}(t,0),&\, \forall \, j,k = 1,...,N, \, t > 0, \\
\ds \sum_{j=1}^N \partial^2_{x} u_j (t,0) = - \alpha \, u_1(t,0) +g(t),& \, \forall \, t > 0, \\
u_j(t,\ell_j) = 0,& \, \forall \, t > 0, \, j= 1,...,N,\\
\partial_x u_j (t,\ell_j) = g_j(t),& \, \forall \, t > 0, \, j= 1,...,N, \\
u_j(0,x)=u_j^0(x),& \, \forall \, x \in (0,\ell_j),\,   j=1,...,N.
\end{array}
\right.
\end{equation*}

By applying the Hilbert Uniqueness Method, \cite{Lions}, it is well known that the exact boundary controllability is equivalent to the inequality of observability for the following backward adjoint problem.
\begin{equation*}
\left \{
\begin{array}{ll}
(\partial_t \varphi_{j} + \partial_x \varphi_{j} + \partial^3_x \varphi_j)(t,x)=0,&\, \forall \, x \in(0,\ell_j),\, t>0,\, j = 1,...,N, \\
\varphi_{j}(t,0)=\varphi_{k}(t,0),&\, \forall \, j,k = 1,...,N, \, t > 0, \\
\partial_x\varphi_j(t,0)=0& \, \forall \, t > 0, \, j= 1,...,N, \\
\ds \sum_{j=1}^N \partial^2_{x} \varphi_j (t,0) = (\alpha-N)\varphi_1(t,0) ,& \, \forall \, t > 0, \\
\varphi_j(t,\ell_j) = 0,& \, \forall \, t > 0, \, j= 1,...,N,\\
\varphi_j(T,x)=\varphi_j^T(x),& \, \forall \, x \in (0,\ell_j),\,   j=1,...,N.
\end{array}
\right.
\end{equation*}
By following the same steps as done for Theorem \ref{inequality_observability}, we can prove this observability inequality,
\begin{theorem}
Let $(\ell_i)_{i=1..N}\in (0,+\infty)^N$ such that $\# \{\ell_i\in \mathcal N\}\leq 1$. Then for all $T >0$, there exists $C>0$ such that for all $\underline \varphi^T\in \mathbb{L}^2(\mathcal T)$ we have,
\begin{equation}\label{observabilitybis}
\norm{\underline \varphi^T}{\mathbb{L}^2(\mathcal T)}^2\leq C \left(\sum_{j=1}^N\norm{\partial_x \varphi_j(.,\ell_j)}{L^2(0,T)}^2+\int_0^T \varphi_1^2(t,0)dt\right),
\end{equation}
where $\underline \varphi\in \mathbb B$ is the solution of the backward adjoint problem.
\end{theorem}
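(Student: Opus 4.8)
The plan is to reproduce the compactness--uniqueness scheme of Theorem \ref{inequality_observability}, now adapted to the backward adjoint system, and the preliminary step is to establish by the multiplier method (exactly as in Proposition \ref{well_posed_LKdV}) the energy identity together with the direct and reverse trace estimates. Multiplying $\partial_t\varphi_j+\partial_x\varphi_j+\partial^3_x\varphi_j=0$ by $\bar\varphi_j$, integrating over $(0,\ell_j)$, taking real parts and summing over $j$, the conditions $\varphi_j(t,\ell_j)=0$, $\partial_x\varphi_j(t,0)=0$, $\varphi_j(t,0)=\varphi_1(t,0)$ and $\sum_j\partial^2_x\varphi_j(t,0)=(\alpha-N)\varphi_1(t,0)$ collapse all the node terms and yield
\[
\tfrac12\tfrac{d}{dt}\norm{\underline\varphi(t,.)}{\mathbb L^2(\mathcal T)}^2=\Big(\alpha-\tfrac N2\Big)\abs{\varphi_1(t,0)}^2+\tfrac12\sum_{j=1}^N\abs{\partial_x\varphi_j(t,\ell_j)}^2,
\]
whose integration over $(0,T)$ is the trace-regularity (direct) inequality bounding $\varphi_1(.,0)$ and $\partial_x\varphi_j(.,\ell_j)$ in $L^2(0,T)$ by $\norm{\underline\varphi^T}{\mathbb L^2(\mathcal T)}$. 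The multiplier $q_j=x$ then supplies, as in step (2) of Proposition \ref{well_posed_LKdV}, the control of $\norm{\underline\varphi}{L^2(0,T,\mathbb H^1_e(\mathcal T))}$, while the multiplier $q_j=t$, after one integration by parts in time, gives the reverse estimate analogous to \eqref{u^0_bis},
\[
\norm{\underline\varphi^T}{\mathbb L^2(\mathcal T)}^2\leq \tfrac1T\norm{\underline\varphi}{L^2(0,T,\mathbb L^2(\mathcal T))}^2+2\Big(\alpha-\tfrac N2\Big)\norm{\varphi_1(.,0)}{L^2(0,T)}^2+\sum_{j=1}^N\norm{\partial_x\varphi_j(.,\ell_j)}{L^2(0,T)}^2,
\]
which bounds the terminal datum by the interior $L^2$ norm plus the observed traces.

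With these in hand I argue by contradiction. If \eqref{observabilitybis} fails I take a sequence $\underline\varphi^{T,n}$ with $\norm{\underline\varphi^{T,n}}{\mathbb L^2(\mathcal T)}=1$ whose observed traces tend to $0$. The $\mathbb H^1_e$-bound makes $(\underline\varphi^n)$ bounded in $L^2(0,T,\mathbb H^1_e(\mathcal T))$ and $(\partial_t\underline\varphi^n)$ bounded in $L^2(0,T,\mathbb H^{-2}_e(\mathcal T))$, so by the Aubin--Lions lemma $(\underline\varphi^n)$ is relatively compact, hence convergent, in $L^2(0,T,\mathbb L^2(\mathcal T))$. Substituting this convergence into the reverse estimate shows $(\underline\varphi^{T,n})$ is Cauchy in $\mathbb L^2(\mathcal T)$; it therefore converges to some $\underline\varphi^T$ with $\norm{\underline\varphi^T}{\mathbb L^2(\mathcal T)}=1$, whose solution $\underline\varphi$ satisfies $\varphi_1(t,0)=0$ and $\partial_x\varphi_j(t,\ell_j)=0$ for every $j$ and almost every $t$.

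It remains to upgrade these overdetermined conditions to $\underline\varphi\equiv\underline 0$. Following Rosier, the set of terminal data producing zero observed traces is a finite-dimensional, semigroup-invariant subspace of $\mathbb L^2(\mathcal T)$; if nontrivial it contains an eigenfunction, so there exist $(\lambda_i)\in\mathbb C^N$ and $\underline y\neq\underline 0$ solving $\lambda_i y_i+y_i'+y_i'''=0$ together with $y_i(\ell_i)=y_i'(\ell_i)=0$ (from $\varphi_j(\ell_j)=\partial_x\varphi_j(\ell_j)=0$), $y_i(0)=0$ (from $\varphi_1(0)=0$ and continuity at the node), $y_i'(0)=0$ (the node condition $\partial_x\varphi_j(0)=0$) and $\sum_i y_i''(0)=(\alpha-N)y_1(0)=0$. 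These are precisely the hypotheses of assertion \eqref{assertion}, so under $\#\{\ell_i\in\mathcal N\}\leq 1$ the Lemma proved inside Theorem \ref{inequality_observability} (through Rosier's Lemma \ref{lemma_rosier}) forces $\underline y=\underline 0$, a contradiction. The one place where care is genuinely needed, and the main obstacle, is the reverse estimate together with the verification that the limiting boundary data coincide exactly with \eqref{assertion}; once this matching is confirmed the spectral Lemma applies verbatim and establishes \eqref{observabilitybis}.
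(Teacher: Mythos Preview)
Your proposal is correct and follows precisely the route the paper itself indicates: the paper gives no detailed argument for this theorem, stating only that it is obtained ``by following the same steps as done for Theorem \ref{inequality_observability},'' and your write-up carries out exactly those steps for the backward adjoint system. The energy identity, the multiplier estimates (with $q_j=x$ and $q_j=t$ playing the roles that $q_j=x$ and $q_j=T-t$ played in Proposition \ref{well_posed_LKdV}), the Aubin--Lions compactness, and the reduction to the spectral Lemma via assertion \eqref{assertion} are all correctly adapted, including the key verification that the overdetermined limit satisfies $y_i(0)=y_i'(0)=y_i(\ell_i)=y_i'(\ell_i)=0$ and $\sum_i y_i''(0)=0$.
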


Thus  we get the following exact boundary controllability result, provided that the network is non critical.

\begin{theorem}
Let $T>0$ and $(\ell_i)_{i=1,\ldots,N}\in (0,+\infty)^N$ such that $\# \{\ell_i\in \mathcal N\}\leq 1$. Then  for all $\underline u^0,\underline u^T\in \mathbb L^2(\mathcal T)$, there exists $\underline g\in \mathbb{L}^2(0,T)$ and $g\in L^2(0,T)$ such that the solution $\underline u\in \mathbb B$ of $(LKdV_{control})$ satisfies $\underline u(0,.)=\underline u^0$ and $\underline u(T,.)=\underline u^T$.
\end{theorem}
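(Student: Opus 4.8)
The plan is to apply the Hilbert Uniqueness Method (HUM), reducing the exact boundary controllability of $(LKdV_{control})$ to the observability inequality \eqref{observabilitybis} proved just above. The whole scheme rests on one integration-by-parts identity linking the controlled system to its backward adjoint, so I would establish that first. Working with real-valued solutions, let $\underline u\in\mathbb B$ solve $(LKdV_{control})$ with controls $(\underline g,g)$ and datum $\underline u^0$, and let $\underline\varphi$ solve the backward adjoint problem with terminal datum $\underline\varphi^T$. Multiplying the equation for $u_j$ by $\varphi_j$, integrating over $(0,T)\times(0,\ell_j)$, summing over $j$, and integrating by parts in $t$ and $x$, all interior terms cancel (both solve $\partial_t w+\partial_x w+\partial_x^3 w=0$) and the boundary terms at the endpoints and at the node collapse, after using $\sum_j\partial_x^2 u_j(t,0)=-\alpha u_1(t,0)+g(t)$, $\partial_x\varphi_j(t,0)=0$ and $\sum_j\partial_x^2\varphi_j(t,0)=(\alpha-N)\varphi_1(t,0)$, to
\begin{equation*}
(\underline u(T),\underline\varphi^T)_{\mathbb L^2(\mathcal T)}-(\underline u^0,\underline\varphi(0))_{\mathbb L^2(\mathcal T)}=\sum_{j=1}^N\int_0^T g_j(t)\,\partial_x\varphi_j(t,\ell_j)\,dt+\int_0^T g(t)\,\varphi_1(t,0)\,dt .
\end{equation*}
The cancellation at the central node is precisely why the adjoint conditions are chosen as they are: the coefficient of $u_1(t,0)\varphi_1(t,0)$ equals $N-\alpha+(\alpha-N)=0$.

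By linearity it suffices to reach an arbitrary target from zero: writing $\underline u=S(\cdot)\underline u^0+\underline u_c$ with $S$ the contraction semigroup of Proposition \ref{3exist1} (the controls-off solution), it remains to steer $\underline u_c$ from $\underline 0$ to $\underline u^T-S(T)\underline u^0\in\mathbb L^2(\mathcal T)$. I would therefore define $\Lambda\colon\mathbb L^2(\mathcal T)\to\mathbb L^2(\mathcal T)$ by: given $\underline\varphi^T$, solve the backward adjoint to get $\underline\varphi$, set $g_j(t):=\partial_x\varphi_j(t,\ell_j)$ and $g(t):=\varphi_1(t,0)$, solve $(LKdV_{control})$ forward with $\underline u^0=\underline 0$, and put $\Lambda\underline\varphi^T:=\underline u(T)$. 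Pairing the identity above (with $\underline u^0=\underline 0$) against a second adjoint datum $\underline\psi^T$ gives
\begin{equation*}
(\Lambda\underline\varphi^T,\underline\psi^T)_{\mathbb L^2(\mathcal T)}=\sum_{j=1}^N\int_0^T\partial_x\varphi_j(t,\ell_j)\,\partial_x\psi_j(t,\ell_j)\,dt+\int_0^T\varphi_1(t,0)\,\psi_1(t,0)\,dt ,
\end{equation*}
which is symmetric in $(\underline\varphi^T,\underline\psi^T)$, so $\Lambda$ is self-adjoint. Taking $\underline\psi^T=\underline\varphi^T$ and invoking \eqref{observabilitybis} yields coercivity, $(\Lambda\underline\varphi^T,\underline\varphi^T)_{\mathbb L^2(\mathcal T)}\geq (1/C)\,\norm{\underline\varphi^T}{\mathbb L^2(\mathcal T)}^2$, while boundedness of the associated bilinear form follows from the adjoint trace bounds. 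Lax–Milgram then makes $\Lambda$ an isomorphism of $\mathbb L^2(\mathcal T)$, and choosing $\underline\varphi^T=\Lambda^{-1}(\underline u^T-S(T)\underline u^0)$ produces the desired controls.

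The genuinely deep ingredient, the observability inequality \eqref{observabilitybis}, is already granted, so I expect the main obstacle to be the \emph{well-posedness of the forward problem $(LKdV_{control})$ under merely $L^2$ controls} acting both at the endpoints through the Neumann conditions $\partial_x u_j(t,\ell_j)=g_j(t)$ and at the node through $g$. Proposition \ref{well_posed_LKdV} only covers node forcing with homogeneous endpoint data, so one must construct $\underline u$ by transposition against the adjoint, show $\underline u\in\mathbb B$, and obtain the trace estimates ensuring $\Lambda$ is bounded. Dually, boundedness of the HUM form requires the hidden regularity $\partial_x\varphi_j(.,\ell_j),\,\varphi_1(.,0)\in L^2(0,T)$ for the adjoint, which I would derive by repeating the multiplier computations of Proposition \ref{well_posed_LKdV} (the choices $\underline q=\underline 1$ and $q_j=x$) on the adjoint system, whose boundary conditions differ only by the relocation of the vanishing trace to $x=0$. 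Once this transposition well-posedness and the companion trace bounds are in place, the conclusion is immediate from the coercivity supplied by \eqref{observabilitybis} and the Lax–Milgram argument above.
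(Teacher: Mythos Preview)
Your proposal is correct and follows exactly the paper's approach: the paper simply invokes the Hilbert Uniqueness Method and the observability inequality \eqref{observabilitybis}, stating that exact controllability of $(LKdV_{control})$ is equivalent to observability of the backward adjoint system, without spelling out any further details. Your write-up is in fact more complete than the paper's, since you exhibit the duality identity, construct the HUM operator $\Lambda$, and flag the needed well-posedness and hidden-regularity ingredients that the paper leaves entirely implicit.
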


By using a standard  fixed point result we then prove the local exact  controllability result for the non linear problem,
\begin{equation*}
\leqno(KdV_{control}) 
\left \{
\begin{array}{ll}
(\partial_t u_{j} + \partial_x u_{j} + u_j \partial_x u_j + \partial^3_x u_j)(t,x)=0,&\, \forall \, x \in(0,\ell_j),\, t>0,\, j = 1,...,N, \\
u_{j}(t,0)=u_{k}(t,0),&\, \forall \, j,k = 1,...,N, \, t > 0, \\
\ds \sum_{j=1}^N \partial^2_{x} u_j (t,0) = - \alpha \, u_1(t,0) - \frac{N}{3} u^2_1(t,0)+g(t),& \, \forall \, t > 0, \\
u_j(t,\ell_j) = 0,& \, \forall \, t > 0, \, j= 1,...,N,\\
\partial_x u_j (t,\ell_j) = g_j(t),& \, \forall \, t > 0, \, j= 1,...,N, \\
u_j(0,x)=u_j^0(x),& \, \forall \, x \in (0,\ell_j),\,   j=1,...,N,
\end{array}
\right.
\end{equation*}
\begin{theorem}
Let $T>0$ and $(\ell_i)_{i=1,\ldots,N}\in (0,+\infty)^N$ such that $\# \{\ell_i\in \mathcal N\}\leq 1$. Then there exists $r>0$ such that  for all $\underline u^0,\underline u^T\in \mathbb L^2(\mathcal T)$ with $\norm{\underline u^0}{\mathbb L^2(\mathcal T)}<r$ and $\norm{\underline u^T}{\mathbb L^2(\mathcal T)}<r$, there exists $\underline g\in \mathbb{L}^2(0,T):= \ds \prod_{j=1}^N L^2(0,T)$ and $g\in L^2(0,T)$ such that the solution $\underline u\in \mathbb B$ of $(KdV_{control})$ satisfies $\underline u(0,.)=\underline u^0$ and $\underline u(T,.)=\underline u^T$.
\end{theorem}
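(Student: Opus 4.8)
The plan is to reduce the nonlinear problem to the linear exact controllability result established just above by a Banach fixed point argument, treating the two nonlinear terms $u_j\partial_x u_j$ and $\frac{N}{3}u_1^2(t,0)$ as forcing data. First I would record the linear control operator: the exact controllability theorem for $(LKdV_{control})$ proved above provides, for every pair $(\underline u^0,\underline u^T)\in\mathbb{L}^2(\mathcal T)\times\mathbb{L}^2(\mathcal T)$, controls $(\underline g,g)$ depending linearly and continuously on $(\underline u^0,\underline u^T)$ and steering the homogeneous linear system from $\underline u^0$ at $t=0$ to $\underline u^T$ at $t=T$. I would then upgrade this to a control operator that also absorbs a source term $\underline f\in L^1(0,T,\mathbb{L}^2(\mathcal T))$ and a node forcing $h\in L^2(0,T)$ by superposition: writing the controlled solution as $\underline u_{\mathrm{free}}+\underline u_{\mathrm{ctrl}}$, where $\underline u_{\mathrm{free}}$ solves the inhomogeneous linear system with data $(\underline u^0,\underline f,h)$ and no control, and $\underline u_{\mathrm{ctrl}}$ solves the homogeneous controlled system, it suffices to steer $\underline u_{\mathrm{ctrl}}$ from $\underline 0$ to $\underline u^T-\underline u_{\mathrm{free}}(T,.)$. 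Estimate \eqref{ineq_source} of Proposition \ref{well_posed_source_term} bounds $\underline u_{\mathrm{free}}$ in $\mathbb B$, hence its terminal trace in $\mathbb{L}^2(\mathcal T)$, in terms of $(\underline u^0,\underline f,h)$, so composing with the linear control operator yields a solution map satisfying
\[
\norm{\underline u}{\mathbb B}\leq C\left(\norm{\underline u^0}{\mathbb{L}^2(\mathcal T)}+\norm{\underline u^T}{\mathbb{L}^2(\mathcal T)}+\norm{\underline f}{L^1(0,T,\mathbb{L}^2(\mathcal T))}+\norm{h}{L^2(0,T)}\right),
\]
which is linear and continuous in all four arguments.

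Next I would set up the fixed point. For $\underline w\in\mathbb B$ define $\Phi(\underline w)=\underline u$ to be the controlled linear solution produced above with source $\underline f=-\underline w\,\partial_x\underline w$ and node forcing $h=-\frac{N}{3}w_1^2(.,0)$, with the controls chosen so that $\underline u(0,.)=\underline u^0$ and $\underline u(T,.)=\underline u^T$. Any fixed point of $\Phi$ is a solution of $(KdV_{control})$ with the prescribed endpoints. The quadratic nature of the two nonlinear terms is exactly what makes $\Phi$ contractive for small data: Proposition \ref{uu_x} gives $\norm{\underline w\,\partial_x\underline w}{L^1(0,T,\mathbb{L}^2(\mathcal T))}\leq C\norm{\underline w}{\mathbb B}^2$ and Proposition \ref{u(.,0)} gives $\norm{w_1^2(.,0)}{L^2(0,T)}\leq C\norm{\underline w}{\mathbb B}^2$, so the control-operator estimate above yields $\norm{\Phi(\underline w)}{\mathbb B}\leq C(\norm{\underline u^0}{\mathbb{L}^2(\mathcal T)}+\norm{\underline u^T}{\mathbb{L}^2(\mathcal T)}+\norm{\underline w}{\mathbb B}^2)$. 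Applying the same two propositions to the differences $\underline w^1\partial_x\underline w^1-\underline w^2\partial_x\underline w^2$ and $(w_1^1)^2(.,0)-(w_1^2)^2(.,0)$ gives $\norm{\Phi(\underline w^1)-\Phi(\underline w^2)}{\mathbb B}\leq C(\norm{\underline w^1}{\mathbb B}+\norm{\underline w^2}{\mathbb B})\norm{\underline w^1-\underline w^2}{\mathbb B}$. Choosing a radius $\rho$ with $2C\rho<1$ and then $r>0$ so small that $C(2r+\rho^2)<\rho$, the map $\Phi$ sends the ball $B_{\mathbb B}(0,\rho)$ into itself and is a contraction there whenever $\norm{\underline u^0}{\mathbb{L}^2(\mathcal T)}<r$ and $\norm{\underline u^T}{\mathbb{L}^2(\mathcal T)}<r$. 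This is the same mechanism already used in Theorem \ref{well-posed KdV}.

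The main obstacle is the first step, namely manufacturing a control operator that depends continuously on the source term and node forcing rather than only on the endpoint data. The bare linear controllability theorem controls only the homogeneous equation, so I must combine it with the Duhamel superposition above and, crucially, with the inhomogeneous regularity estimate \eqref{ineq_source} to bound the $\mathbb B$-norm of the free evolution and of its terminal trace. Once that operator is in hand with the displayed linear bound, the remaining analysis is routine and mirrors the contraction argument of Theorem \ref{well-posed KdV}; the hypothesis $\#\{\ell_i\in\mathcal N\}\leq 1$ enters only through the underlying linear controllability, equivalently the observability inequality \eqref{observabilitybis}, and plays no further role in the nonlinear step.
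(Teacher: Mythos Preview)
Your proposal is correct and is precisely the ``standard fixed point result'' the paper invokes without further detail; the paper gives no proof beyond that one-line remark, so your fleshing out of the argument---linear control operator extended by Duhamel superposition to absorb source and node forcing, then contraction via the quadratic estimates of Propositions \ref{uu_x} and \ref{u(.,0)}---is exactly what is intended. The one point you correctly flag as needing care, namely the $\mathbb B$-regularity of the linear controlled trajectory with nonhomogeneous Neumann data $\partial_x u_j(t,\ell_j)=g_j(t)$, is not explicitly proved in the paper either and must be supplied by the same multiplier techniques used in Proposition \ref{well_posed_LKdV} (as in Rosier's original treatment).
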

\begin{remark}
If $\# \{\ell_i\in \mathcal N\}\leq 1$, there exists a finite dimensional space of $\mathbb L^2(\mathcal T)$ which is unreachable for the linearized system  $(LKdV_{control})$.  We could certainly prove the controllability of the non linear problem by using some power series expansion for the critical branches, following the same type of proof as \cite{Coron_Crepeau}, \cite{Cerpa1} or \cite{Cerpa_Crepeau}. 
\end{remark}

\begin{remark}
In this last section, we prove the controllability by using (N+1) controls, acting at the external nodes and at the central node. It could be interesting to reduce the number of controls.
\end{remark}

\end{document}